\date{}
\pgfplotsset{compat=newest}
\newtheorem{theorem}{Theorem}
\newtheorem{cor}[theorem]{Corollary}
\newtheorem{prop}[theorem]{Proposition}
\newtheorem{remark}[theorem]{Remark}
\theoremstyle{definition} 
\newcommand{\ip}[2]{\langle#1\hspace*{.5mm},#2\rangle}
\newcommand{\dual}[2]{\langle#1\hspace*{.5mm},#2\rangle}
\newcommand{\vdual}[2]{(#1\hspace*{.5mm},#2)}
\newcommand{\ds}{\displaystyle}
\newcommand{\transp}{\mathsf{T}}
\newcommand{\diam}{\mathrm{diam}}
\def\div{\operatorname{div}}
\def\Div{\operatorname{\bf div}}
\def\dDiv{\operatorname{div\, \bf div}}
\def\grad{\nabla}
\def\Grad{\boldsymbol{\varepsilon}}
\def\Ggrad{\Grad\grad}
\newcommand{\traceO}[1]{\mathrm{tr}_{#1}^{\mathrm{grad}}}
\newcommand{\tracediv}[1]{\mathrm{tr}_{#1}^{\mathrm{div}}}
\newcommand{\traceDD}[1]{\mathrm{tr}_{#1}^{\mathrm{dDiv}}} 
\newcommand{\traceGG}[1]{\mathrm{tr}_{#1}^{\mathrm{Ggrad}}} 
\newcommand{\cPP}{d(\Omega)}
\newcommand{\cC}{\ensuremath{\mathcal{C}}}
\newcommand{\cCinv}{\ensuremath{\mathcal{C}^{-1}}}
\newcommand{\R}{\ensuremath{\mathbb{R}}}
\newcommand{\bH}{\ensuremath{\boldsymbol{H}}}
\newcommand{\HH}{\ensuremath{\mathbb{H}}}
\newcommand{\bL}{\ensuremath{\mathbf{L}}}
\newcommand{\LL}{\ensuremath{\mathbb{L}}}
\newcommand{\PP}{\ensuremath{\mathbb{P}}}
\def\MM{\mathbf{M}}
\def\QQ{\mathbf{Q}}
\def\dQQ{{\boldsymbol\delta\!\QQ}}
\newcommand{\g}{\ensuremath{\mathbf{g}}}
\newcommand{\bn}{\ensuremath{\mathbf{n}}}
\newcommand{\bv}{\ensuremath{\boldsymbol{v}}}
\newcommand{\bpsi}{\ensuremath{\boldsymbol{\psi}}}
\newcommand{\cT}{\ensuremath{\mathcal{T}}}
\newcommand{\HOtr}[1]{\ensuremath{H^{1/2}_{00}(#1)}}
\newcommand{\Hdivtr}[1]{\ensuremath{H^{-1/2}(#1)}}
\newcommand{\Hdiv}[1]{\ensuremath{\bH(\mathrm{div},#1)}}
\newcommand{\HdDiv}[1]{{\HH(\dDiv,#1)}}
\newcommand{\HdDivz}[1]{{\HH_0(\dDiv,#1)}}
\newcommand{\cS}{\ensuremath{\mathcal{S}}}
\newcommand{\bt}{\ensuremath{\mathbf{t}}}
\newcommand{\cP}{\ensuremath{\mathcal{P}}}
\newcommand{\OO}{\ensuremath{\mathcal{O}}}
\newcommand{\EE}{\ensuremath{\mathcal{E}}}
\newcommand{\GG}{\ensuremath{\mathbf{G}}}
\newcommand{\tu}{{\widehat u}}
\newcommand{\btu}{{\widehat{\boldsymbol{u}}}}
\newcommand{\tQ}{{\widehat \bq}}
\newcommand{\tM}{{\widehat{\boldsymbol{m}}}}
\newcommand{\dv}{{\delta\!v}}
\newcommand{\tv}{{\widehat v}}
\newcommand{\btv}{{\widehat{\boldsymbol{v}}}}
\newcommand{\tsigma}{{\widehat \sigma}}
\newcommand{\bsigma}{{\boldsymbol\sigma}}
\newcommand{\btau}{{\boldsymbol\tau}}
\newcommand{\dbtau}{{\boldsymbol\delta\!\btau}}
\newcommand{\ttau}{{\widehat\tau}}
\newcommand{\bq}{{\boldsymbol{q}}}
\newcommand{\bu}{\boldsymbol{u}}
\newcommand{\bg}{\boldsymbol{g}}
\newcommand{\bw}{\boldsymbol{w}}
\newcounter{constantsnumber}
\def\setc#1{
  \ifthenelse{\equal{#1}{poinc}}{C_{\rm edge}}{ 
   \refstepcounter{constantsnumber}
   \label{const#1}C_{\theconstantsnumber}}}
\def\c#1{
  \ifthenelse{\equal{#1}{poinc}}{C_{\rm edge}}{ 
    C_{\ref{const#1}}}}
\title{A robust DPG method for large domains
\thanks{Supported by CONICYT through FONDECYT projects 11170050, 1190009.}}
\author{
Thomas~F\"uhrer\thanks{
Facultad de Matem\'aticas, Pontificia Universidad Cat\'olica de Chile,
Avenida Vicu\~na Mackenna 4860, Santiago, Chile,
email: {\tt \{tofuhrer,nheuer\}@mat.uc.cl}}
\and
Norbert Heuer$^\dagger$
}
\begin{document}
\maketitle

\begin{abstract}
We observe a dramatic lack of robustness of the DPG method when solving problems on large domains
and where stability is based on a Poincar\'e-type inequality. We show how robustness
can be re-established by using appropriately scaled test norms.
As model cases we study the Poisson problem and the Kirchhoff--Love plate bending model,
and also include fully discrete variants where optimal test functions are approximated.
Numerical experiments for both model problems,
including an-isotropic domains and mixed boundary conditions, confirm our findings.

\bigskip
\noindent
{\em Key words}: discontinuous Petrov--Galerkin method, optimal test functions,
                 locking phenomena, plate bending, ultraweak formulation

\noindent
{\em AMS Subject Classification}:
65N30, 
35J35, 
74K20, 
74S05 
\end{abstract}

\section{Introduction}

The discontinuous Petrov--Galerkin method with optimal test functions (DPG method)
aims at automatically satisfying the discrete inf-sup condition. In the setting proposed
by Demkowicz and Gopalakrishnan, this is achieved by selecting optimal test functions
defined in product test spaces (``broken spaces'') with inner products,
cf.~\cite{DemkowiczG_11_CDP}. Apart from its inherent discrete stability,
this method has several advantages like generating symmetric, positive definite system matrices
(for real unknowns) and providing built-in error estimators,
cf.~\cite{DemkowiczGN_12_CDP,CarstensenDG_14_PEC}.
Perhaps less known, but frequently observed in numerical experiments,
is that built-in error estimators reliably generate
appropriate mesh refinements, even when starting with very coarse meshes of a few elements.
To cite from the conclusions of \cite{DemkowiczGN_12_CDP}:
the ``adaptive process \ldots starts with few elements only'' and
``one does not need to start with a mesh that reflects an expertise on the problem.''
This is also reflected by the numerical experiments in~\cite{DemkowiczH_13_RDM},
it is a feature that is extremely relevant when solving singularly perturbed problems.

It turns out that most DPG experiments reported so far are for problems on relatively small
domains, even when dealing with practically relevant models, see, e.g., the
paper \cite{BramwellDGQ_12_Lhp} on linear elasticity with focus on locking.
In our research on plate bending models we observed a locking phenomenon
that appears when solving more realistic cases, e.g., the bending of larger plates.
(For a systematic discussion of locking phenomena we refer to \cite{BabuskaS_92_LRF}.)
This locking can be traced back to using Poincar\'e-type estimates
in the stability analysis of adjoint problems.
They fail to be uniformly stable for large domains.
The robustness of the DPG method is equivalent to the uniform stability of the
corresponding adjoint problem, when using ultraweak variational formulations
where only derivatives of test functions appear.
Now, such formulations are the preferred ones for DPG approximations,
for practical reasons (direct representation of field variables, simple basis functions,
straightforward implementation of symmetry of tensors, simplicity of conformity)
and theoretical motivation (standard analytical tools).
There are only few exceptions, e.g.~\cite{DemkowiczG_13_PDM,BroersenS_15_PGD}.
Therefore, the fact that there is a domain-inflicted locking phenomenon of the DPG method
for ultraweak formulations on large domains is a major threat for practical applications.
Of course, this domain locking will only appear for problems whose adjoint stability
is based upon such a Poincar\'e-type estimate, including linear elasticity where
Korn's second inequality is used to prove ellipticity. The possible lack of robustness
depends on the geometry and the type of boundary condition.
For large domains, the standard DPG scheme exhibits a pre-asymptotic range even for the
simple Poisson problem (without reaction!).
This becomes a major problem when solving higher-order problems like plate bending models.
In those cases, locking may occur even for moderately sized domains,
as our numerical experiments illustrate.

In this paper we show how the DPG scheme with ultraweak formulation can be fixed
to become robust with respect to the size of the domain. One has to weight
individual terms of the test norm appropriately, and ensure that analytical estimates
hold uniformly with respect to involved parameters. We show this for the Poisson
problem, cf.~\cite{DemkowiczG_11_ADM}, and a scaled model of Kirchhoff--Love plate bending
as analyzed in \cite{FuehrerHN_19_UFK,FuehrerH_19_FDD}. We also indicate
how to proceed in the fully discrete cases of approximated optimal test functions,
coined ``practical DPG method'' by Gopalakrishnan and Qiu in \cite{GopalakrishnanQ_14_APD}.
Indeed, we show that our lowest-order scaled scheme is domain robust in the discrete setting.

An overview of the remainder is as follows.
In Section~\ref{sec_P} we consider the Poisson problem in detail. We first recall
the standard setting and its quasi-optimal convergence, discussing in Remark~\ref{rem_P}
the relevance of the Poincar\'e--Friedrichs inequality. In \S\ref{sec_Pd}
we present the setting that leads to a domain-robust DPG scheme, and prove this
fact (Theorem~\ref{thm_Pd}). Afterwards we analyze the fully discrete case.
Section~\ref{sec_KL} presents and analyzes our domain-robust DPG scheme for
the Kirchhoff--Love plate bending model. Both clamped and simply supported plates are
considered. After presenting the model problem, in \S\ref{sec_KL_traces}
we introduce scaled norms and analyze trace operators in these norms. Our domain-robust
DPG scheme is then defined and analyzed in \S\ref{sec_KL_DPG}.
The fully discrete setting is subject of \S\ref{sec_KLh}.
Numerical experiments for both model problems are reported in Section~\ref{sec_num}.
There, we consider different boundary conditions and sequences of isotropic and
an-isotropic domains. We end with some conclusions.

In our bounds below, the notation $a\lesssim b$ means that there is a constant $C>0$,
independent of the domain $\Omega$, the underlying mesh $\cT$, the scaling parameter $d$
to be introduced, and involved functions, such that $a\le C\,b$.

\section{Poisson problem} \label{sec_P}

We consider the Poisson problem with and without reaction,
\begin{align} \label{Poisson}
  u\in H^1_0(\Omega):\qquad
  &-\Delta u + \gamma u = f \quad \text{ in } \Omega.
\end{align}
Here, $\Omega\subset\R^n$, $n\in\{2,3\}$, is a bounded, polygonal/polyhedral Lipschitz domain.
Furthermore, $f\in L_2(\Omega)$ is a given function and $\gamma\ge 0$.

\subsection{Standard DPG method}

Let us introduce some spaces. For $\omega\subset\Omega$,
$L_2(\omega)$, $H^1(\omega)$, and $\Hdiv{\omega}$ denote standard Sobolev spaces.
The $L_2(\omega)$-bilinear form and norm are $\vdual{\cdot}{\cdot}_\omega$ and $\|\cdot\|_\omega$.
Furthermore, $H^1_0(\Omega)\subset H^1(\Omega)$
is the subspace of $H^1$-functions with vanishing trace on $\partial\Omega$.
We use the norms
\begin{align*}
   \|v\|_{1,\omega}^2 &:= \|v\|_\omega^2 + \|\nabla v\|_\omega^2\quad (v\in H^1(\omega)),\quad
   \|\btau\|_{\div,\omega}^2 := \|\btau\|_\omega^2 + \|\div\btau\|_\omega^2\quad (\btau\in\Hdiv{\omega})
\end{align*}
and drop the index $\omega$ when $\omega=\Omega$.
We consider a (family of) mesh(es) $\cT$ of open Lipschitz elements $T$ on $\Omega$ with boundaries
$\partial T$, so that any two elements do not intersect and $\bar\Omega=\cup_{T\in\cT}\bar T$.
Induced product spaces are
\[
   H^1(\cT):=\Pi_{T\in\cT} H^1(T),\qquad \Hdiv{\cT}:=\Pi_{T\in\cT} \Hdiv{T}
\]
with norms $\|\cdot\|_{1,\cT}$ and $\|\cdot\|_{\div,\cT}$, respectively.

In the following $\vdual{\cdot}{\cdot}_\cT$ denotes the $\cT$-piecewise $L_2$-bilinear form,
that is, appearing differential operators will be applied only locally on elements $T\in\cT$.
Using this notation, there are two canonical (local) trace operators,
$\traceO{T}:\;H^1(T)\to (\Hdiv{T})'$ and $\tracediv{T}:\;\Hdiv{T}\to (H^1(T))'$
with support on $\partial T$ ($T\in\cT$), defined by
\begin{align}
   \label{dual_gradT}
   &\dual{\traceO{T}(v)}{\dbtau}_{\partial T} :=
   \vdual{v}{\div\dbtau}_T + \vdual{\grad v}{\dbtau}_T
   &&(v\in H^1(T),\ \dbtau\in\Hdiv{T}),\\
   \label{dual_divT}
   &\dual{\tracediv{T}(\btau)}{\dv}_{\partial T} :=
   \vdual{\btau}{\grad \dv}_T + \vdual{\div\btau}{\dv}_T
   &&(\btau\in\Hdiv{T},\ \dv\in H^1(T)),
\end{align}
and the product variants $\traceO{}:\;H^1(\cT)\to (\Hdiv{\cT})'$,
$\tracediv{}:\;\Hdiv{\cT}\to (H^1(\cT))'$ defined by
\begin{align}
   \label{dual_grad}
   &\dual{\traceO{}(v)}{\dbtau}_\cS := \sum_{T\in\cT} \dual{\traceO{T}(v)}{\dbtau}_{\partial T}
   &&(v\in H^1(\cT),\ \dbtau\in\Hdiv{\cT}),\\
   \label{dual_div}
   &\dual{\tracediv{}(\btau)}{\dv}_\cS := \sum_{T\in\cT} \dual{\tracediv{T}(\btau)}{\dv}_{\partial T} 
   &&(\btau\in\Hdiv{\cT},\ \dv\in H^1(\cT)).
\end{align}
They give rise to the trace spaces
\[
   \HOtr{\cS} := \traceO{}(H^1_0(\Omega)),\quad
   \Hdivtr{\cS} := \tracediv{}(\Hdiv{\Omega}),
\]
furnished with the following norms,
\begin{align*}
   &\|\tv\|_{1/2,\cS} := \inf\{\|v\|_1;\; v\in H^1(\Omega),\; \traceO{}(v)=\tv\},\\
   &\|\tv\|_{(\div,\cT)'}
   :=
   \sup_{0\not=\dbtau\in\Hdiv{\cT}}
   \frac{\dual{\tv}{\dbtau}_\cS}{\|\dbtau\|_{\div,\cT}}
   \qquad (\tv\in\HOtr{\cS})
\end{align*}
and
\begin{align*}
   &\|\ttau\|_{-1/2,\cS}
   := \inf\{\|\btau\|_{\div};\; \btau\in\Hdiv{\Omega},\; \tracediv{}(\btau)=\ttau\},\\
   &\|\ttau\|_{(1,\cT)'}
   :=
   \sup_{0\not=\dv\in H^1(\cT)}
   \frac{\dual{\ttau}{\dv}_\cS}{\|\dv\|_{1,\cT}}
   \qquad (\ttau\in\Hdivtr{\cS}).
\end{align*}
Here, $\dual{\tv}{\dbtau}_\cS$ and $\dual{\ttau}{\dv}_\cS$ are the dualities
\begin{align} \label{dualg}
   \dual{\tv}{\dbtau}_\cS := \dual{\traceO{}(v)}{\dbtau}_\cS
   \quad\text{for any}\ v\in H^1_0(\Omega)\ \text{such that}\ \traceO{}(v)=\tv,
\end{align}
cf.~\eqref{dual_grad}, \eqref{dual_gradT}, and
\begin{align} \label{duald}
   \dual{\ttau}{\dv}_\cS := \dual{\tracediv{}(\btau)}{\dv}_\cS
   \quad\text{for any}\ \btau\in \Hdiv{\Omega}\ \text{such that}\ \tracediv{}(\btau)=\ttau,
\end{align}
cf.~\eqref{dual_div}, \eqref{dual_divT}.

Now, an ultraweak formulation of problem \eqref{Poisson}, with independent unknowns $\bsigma=\grad u$,
$\tu=\traceO{}(u)$, $\tsigma=\tracediv{}(\bsigma)$, is as follows. \emph{Find
\(\ds
   \bu:=(u,\bsigma,\tu,\tsigma)\in U:= L_2(\Omega)\times \bL_2(\Omega)\times \HOtr{\cS}\times \Hdivtr{\cS}
\)
such that}
\begin{align} \label{VF_P}
   b(\bu,\bv) :=
     \vdual{u}{\div\btau+\gamma v}_\cT + \vdual{\bsigma}{\btau+\grad v}_\cT
     - \dual{\tu}{\btau}_\cS - \dual{\tsigma}{v}_\cS = L(\bv) := \vdual{f}{v}
\end{align}
\emph{for any}
\(\ds
     \bv=(v,\btau)\in V:= H^1(\cT)\times \Hdiv{\cT},
\)
in operator form:
\[
   \bu=(u,\bsigma,\tu,\tsigma)\in U:\quad B\bu=L\quad\text{in}\ V'.
\]
Here, $\bL_2(\Omega):=(L_2(\Omega))^n$ and the dualities $\dual{\tu}{\btau}_\cS$, $\dual{\tsigma}{v}_\cS$
are defined as in \eqref{dualg}, \eqref{duald}, respectively.

For our discrete scheme we continue to define the trial-to-test operator $\Theta:\;U\to V$ by
\[
   \ip{\Theta\bu}{v}_V = b(\bu,v)\quad\forall v\in V
\]
where
\[
   \ip{(v,\btau)}{(\dv,\dbtau)}_V :=
   \vdual{v}{\dv} + \vdual{\grad v}{\grad\dv}_\cT + \vdual{\btau}{\dbtau} + \vdual{\div\btau}{\div\dbtau}_\cT
\]
is the inner product in $V$.
The spaces $U$ and $V$ are furnished with the following norms,
\begin{align*}
   \|\bu\|_U &:= \bigl(\|u\|^2 + \|\bsigma\|^2 + \|\tu\|_{1/2,\cS}^2 + \|\tsigma\|_{-1/2,\cS}^2\bigr)^{1/2}
   && (\bu=(u,\bsigma,\tu,\tsigma)\in U),\\
   \|\bv\|_V &:= \bigl(\|v\|_{1,\cT}^2 + \|\btau\|_{\mathrm{div},\cT}^2\bigr)^{1/2}
   && (\bv=(v,\btau)\in V).
\end{align*}
The standard DPG method is as follows. For a given finite-dimensional subspace $U_h\subset U$,
\begin{align} \label{DPG_P}
   \bu_h\in U_h:\quad b(\bu_h,\bv) = L(\bv)\quad\forall \bv\in \Theta(U_h).
\end{align}

\begin{theorem} \label{thm_P}
Let $f\in L_2(\Omega)$ and $\gamma\ge 0$ be given.
Formulation \eqref{VF_P} is well posed and equivalent to problem \eqref{Poisson}. Specifically,
if $u\in H^1_0(\Omega)$ solves \eqref{Poisson} then
$\bu=(u,\bsigma,\traceO{}(u),\tracediv{}(\bsigma))\in U$ solves \eqref{VF_P}
with $\bsigma:=\grad u$ on $\Omega$. Correspondingly, the solution
$\bu=(u,\bsigma,\tu,\tsigma)\in U$ of \eqref{VF_P} satisfies
$\bsigma=\grad u$, $\tu=\traceO{}(u)$, $\tsigma=\tracediv{}(\bsigma)$, and $u$ solves \eqref{Poisson}.

Furthermore, there exists a unique solution $\bu_h$ to \eqref{DPG_P}. It satisfies
\[
   \|\bu-\bu_h\|_U \le C\,\mathrm{inf}\,\{\|\bu-\bw\|_U;\; \bw\in U_h\}
\]
where $\bu\in U$ is the solution to \eqref{VF_P} and $C>0$ is a constant that is
independent of the mesh $\cT$ and the approximation space $U_h$.
\end{theorem}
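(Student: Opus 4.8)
The plan is to first prove the equivalence with \eqref{Poisson} by integration by parts, then to establish well-posedness of $B\colon U\to V'$ through the Babu\v{s}ka--Ne\v{c}as theory, and finally to obtain quasi-optimality of \eqref{DPG_P} automatically from the optimal-test-function framework of \cite{DemkowiczG_11_CDP}. For the ``strong implies weak'' direction, given a solution $u\in H^1_0(\Omega)$ of \eqref{Poisson} I set $\bsigma:=\grad u$, $\tu:=\traceO{}(u)$ and $\tsigma:=\tracediv{}(\bsigma)$, and verify \eqref{VF_P} directly: applying \eqref{dual_gradT} and \eqref{dual_divT} element by element and summing, the trace terms absorb exactly the inter-element and boundary contributions, so that $b(\bu,\bv)$ collapses to $\vdual{f}{v}$. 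For the converse, testing \eqref{VF_P} with $\bv=(0,\dbtau)$ and with $\bv=(\dv,0)$ for functions supported in the interior of a single element yields, in the distributional sense, the field identities $\bsigma=\grad u$ and $\div\bsigma+\gamma u=f$ on each element; testing then with general $\dbtau$ and $\dv$ identifies $\tu=\traceO{}(u)$ and $\tsigma=\tracediv{}(\bsigma)$, and the vanishing of the jump terms across $\cS$ upgrades the element identities to the global statement $u\in H^1_0(\Omega)$, $-\Delta u+\gamma u=f$.

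For well-posedness it remains to check boundedness and bijectivity of $B$. Continuity of $b$ is immediate from the Cauchy--Schwarz inequality together with the definitions of $\|\cdot\|_{1/2,\cS}$ and $\|\cdot\|_{-1/2,\cS}$. For bijectivity I would establish the inf-sup condition $\|\bu\|_U\lesssim\|B\bu\|_{V'}$ and injectivity of the adjoint $B^*$. By the theory of broken (product) test spaces it is enough to analyse the \emph{unbroken} formulation, in which the test functions are taken conforming, $v\in H^1_0(\Omega)$ and $\btau\in\Hdiv{\Omega}$, so that the two trace terms in $b$ drop out; the broken inf-sup constant is then comparable to the unbroken one, while the correctly chosen trace norms $\|\cdot\|_{1/2,\cS}$ and $\|\cdot\|_{-1/2,\cS}$ are precisely the quotient/dual norms that control the jump contributions tested by non-conforming $\bv$.

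The heart of the matter, and the step I expect to be the main obstacle, is the stability of the associated adjoint problem. Given data $(g,\bg)\in L_2(\Omega)\times\bL_2(\Omega)$, the unbroken adjoint equations $\div\btau+\gamma v=g$ and $\btau+\grad v=\bg$ reduce, upon eliminating $\btau=\bg-\grad v$, to
\begin{align*}
   v\in H^1_0(\Omega)\colon\qquad -\Delta v+\gamma v = g-\div\bg\quad\text{in }H^{-1}(\Omega).
\end{align*}
Since $\bg\in\bL_2(\Omega)$ gives $\div\bg\in H^{-1}(\Omega)$, the right-hand side is admissible, and the problem is uniquely solvable by the Lax--Milgram lemma, the bilinear form $\vdual{\grad v}{\grad w}+\gamma\vdual{v}{w}$ being coercive on $H^1_0(\Omega)$. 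The bound $\|v\|_1\lesssim\|g\|+\|\bg\|$, and hence $\|\btau\|_{\div}\lesssim\|g\|+\|\bg\|$, yields the inf-sup constant of the unbroken problem. This is the crucial point for the remainder of the paper: for $\gamma=0$ the coercivity is exactly the Poincar\'e--Friedrichs inequality, so the inf-sup constant inherits the Poincar\'e constant of $\Omega$ and fails to be uniform as the domain grows --- precisely the lack of robustness discussed in Remark~\ref{rem_P} and repaired in \S\ref{sec_Pd}.

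Injectivity of $B^*$ follows from the same identities: any $\bv=(v,\btau)$ with $B^*\bv=0$ satisfies $\div\btau+\gamma v=0$ and $\btau+\grad v=0$ elementwise, while the vanishing of $\dual{\tu}{\btau}_\cS$ for all $\tu\in\HOtr{\cS}$ and of $\dual{\tsigma}{v}_\cS$ for all $\tsigma\in\Hdivtr{\cS}$ forces $\btau\in\Hdiv{\Omega}$ and $v\in H^1_0(\Omega)$; the homogeneous adjoint problem then gives $v=0$ and $\btau=0$. With $B$ boundedly invertible, quasi-optimality is automatic: the test norm $\|\cdot\|_V$ is induced by the very inner product used to define $\Theta$, so $\Theta$ produces genuinely optimal test functions, the discrete inf-sup constant coincides with the continuous one, and the C\'ea-type estimate holds with a constant $C=\|B\|\,\beta^{-1}$ that depends only on the continuity and inf-sup constants of $b$ --- in particular independent of $\cT$ and $U_h$, though (as the rest of the paper shows) not of $\Omega$.
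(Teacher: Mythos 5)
Your proposal is correct, but note that the paper does not actually prove Theorem~\ref{thm_P}: its entire proof is a citation to \cite{DemkowiczG_11_ADM} for the case $\gamma=0$, plus the remark that the extension to $\gamma>0$ is straightforward. What you have written is a reconstruction of the standard argument behind that citation, and it matches precisely the ingredients the paper itself enumerates in Remark~\ref{rem_P}: stability of the unbroken adjoint problem \eqref{adj_P}, the equivalence of the trace norms $\|\cdot\|_{1/2,\cS}\simeq\|\cdot\|_{(\div,\cT)'}$ and $\|\cdot\|_{-1/2,\cS}\simeq\|\cdot\|_{(1,\cT)'}$ (the ``breaking'' step you invoke to reduce the broken inf-sup condition to the unbroken one, due to \cite{CarstensenDG_16_BSF}), and the minimum-residual interpretation of the ideal DPG method, which converts bounded invertibility of $B$ into quasi-optimality with $C=\|B\|\,\beta^{-1}$. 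Your treatment of the adjoint problem --- eliminating $\btau=\bg-\grad v$, applying Lax--Milgram, with coercivity supplied by Poincar\'e--Friedrichs when $\gamma=0$ --- is exactly the mechanism the paper revisits and rescales in \S\ref{sec_Pd}, and you correctly identify it as the point where the domain dependence of the constant enters. The differences are of emphasis only: the paper gains brevity by outsourcing everything to the literature, whereas your route is self-contained modulo two results you state but do not prove, namely the conformity characterization (a broken function whose trace functional agrees with that of a conforming one is itself conforming) and the trace-norm identities; filling these in would require the arguments of \cite[Theorems~2.3 and~3.3]{CarstensenDG_16_BSF}, or equivalently the inner-product construction that the paper carries out in the scaled setting in Proposition~\ref{prop_norms_Pd}.
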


\begin{proof}
For the case $\gamma=0$ this result is well known, see \cite{DemkowiczG_11_ADM}, and
an extension to $\gamma>0$ is straightforward.
\end{proof}

\begin{remark} \label{rem_P}
Main ingredients to prove Theorem~\ref{thm_P} are the stability of the adjoint problem
\begin{align} \label{adj_P}
   (v,\btau)\in H^1_0(\Omega)\times\Hdiv{\Omega}:\quad
   \div\btau+\gamma v=g_1\in L_2(\Omega),\quad
   \btau+\grad v=\bg_2\in\bL_2(\Omega),
\end{align}
the equivalence of the trace norms $\|\cdot\|_{1/2,\cS}\simeq\|\cdot\|_{(\div,\cT)'}$
and $\|\cdot\|_{-1/2,\cS}\simeq\|\cdot\|_{(1,\cT)'}$, and
the fact that the DPG method with exactly optimal test functions is a minimum
residual method with residual $B(\bu-\bu_h)$ measured in $V'$, cf.~{\rm\cite{DemkowiczG_11_CDP}}.
Carstensen \emph{et al.} {\rm\cite{CarstensenDG_16_BSF}} show that the equivalences of trace norms
hold with constants one,
\begin{align*}
   \|\tv\|_{1/2,\cS}=\|\tv\|_{(\div,\cT)'}\quad\forall\tv\in \HOtr{\cS},\quad
   \|\ttau\|_{-1/2,\cS}=\|\ttau\|_{(1,\cT)'}\quad\forall\ttau\in \Hdivtr{\cS}.
\end{align*}
Reducing \eqref{adj_P} to
\begin{equation} \label{adj_P_v}
   v\in H^1_0(\Omega):\quad -\Delta v+\gamma v=g_1-\div\bg_2\in \bigl(H^1_0(\Omega)\bigr)'
\end{equation}
it is clear that $\gamma>0$ leads to the robust control
$\gamma \|v\|^2+\|\grad v\|^2\le \gamma^{-1}\|g_1\|^2+\|\bg_2\|^2$ and, consequently, to the robust bound
\begin{align} \label{robust_P}
   \|u\|^2 + \|\bsigma\|^2 \le C \|B\bu\|_{V'}^2
   \quad\forall \bu=(u,\bsigma,\tu,\tsigma)\in U
\end{align}
with constant $C=C(\gamma)>0$ independent of $\Omega$.

On the other hand, in the case $\gamma=0$, a control of $\|v\|$ can only be obtained through
the Poincar\'e--Friedrichs inequality
\begin{align} \label{PP} 
   \|v\|\le \cPP\, \|\grad v\|\quad\forall v\in H^1_D(\Omega)
\end{align}
with constant $\cPP\lesssim\diam(\Omega)$ and $H^1_D(\Omega)=H^1_0(\Omega)$,
thus losing robustness of estimate \eqref{robust_P}
for large domains. This lack of robustness also affects the quasi-optimal error estimate.
Of course, the square of the best possible constant in \eqref{PP} is the inverse of the minimum
eigenvalue of the Laplacian. For different boundary conditions, only the (homogeneous)
essential ones enter the Poincar\'e--Friedrichs inequality, indicated by the space $H^1_D(\Omega)$ above.
In the numerical section we consider different boundary conditions though, for simplicity,
our theoretical presentation assumes homogeneous Dirichlet conditions on the whole of the boundary.
\end{remark}

\subsection{Domain-robust DPG method} \label{sec_Pd}

We use the same spaces as defined before, but consider scaled norms in $V$ and, consequently in $U$.
Specifically, denoting $d:=\cPP$ with $\cPP$ from \eqref{PP},
\begin{align*}
   &\|v\|_{1,d,\omega}^2 := d^{-2}\|v\|_\omega^2 + \|\grad v\|_\omega^2,\quad
   \|\btau\|_{\div,d,\omega}^2 := \|\btau\|_\omega^2 + d^2\|\div\btau\|_\omega^2
\end{align*}
($v\in H^1(\omega),\ \btau\in \Hdiv{\omega},\ \omega\subset\Omega$, and dropping the index $\omega$
when $\omega=\Omega$),
\begin{align*}
   &\|v\|_{1,d,\cT}^2 := \sum_{T\in\cT} \|v\|_{1,d,T}^2 \quad (v\in H^1(\cT)),\quad
   \|\btau\|_{\div,d,\cT}^2 := \sum_{T\in\cT} \|\btau\|_{\div,d,T}^2\quad (\btau\in\Hdiv{\cT}),
   \\
   &\|\tv\|_{1/2,d,\cS} := \inf\{\|v\|_{1,d};\; v\in H^1(\Omega),\; \traceO{}(v)=\tv\},\\
   &\|\tv\|_{(\div,d,\cT)'}
   :=
   \sup_{0\not=\dbtau\in\Hdiv{\cT}}
   \frac{\dual{\tv}{\dbtau}_\cS}{\|\dbtau\|_{\div,d,\cT}}
   \qquad (\tv\in\HOtr{\cS}),
   \\
   &\|\ttau\|_{-1/2,d,\cS}
   := \inf\{\|\btau\|_{\div,d};\; \btau\in\Hdiv{\Omega},\; \tracediv{}(\btau)=\ttau\},\\
   &\|\ttau\|_{(1,d,\cT)'}
   :=
   \sup_{0\not=\dv\in H^1(\cT)}
   \frac{\dual{\ttau}{\dv}_\cS}{\|\dv\|_{1,d,\cT}}
   \qquad (\ttau\in\Hdivtr{\cS}).
\end{align*}
\begin{prop} \label{prop_norms_Pd}
\begin{align*}
   \|\tv\|_{1/2,d,\cS}=\|\tv\|_{(\div,d,\cT)'}\quad\forall\tv\in \HOtr{\cS},\quad
   \|\ttau\|_{-1/2,d,\cS}=\|\ttau\|_{(1,d,\cT)'}\quad\forall\ttau\in \Hdivtr{\cS}.
\end{align*}
\end{prop}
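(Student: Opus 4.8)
The two equalities are the $d$-weighted analogues of the constant-one trace identities for $d=1$ recalled in Remark~\ref{rem_P} (Carstensen \emph{et al.}, \cite{CarstensenDG_16_BSF}), so my plan is to reduce the weighted statement to the unweighted one rather than to repeat the underlying orthogonality argument. It is worth first recording the elementary inequality that already exhibits the matching of the weights. For $v\in H^1(\Omega)$ with $\traceO{}(v)=\tv$ and $\dbtau\in\Hdiv{\cT}$, reading the element contribution in \eqref{dual_gradT} as the $L_2(T)$-pairing of the pair $(d^{-1}v,\grad v)$ with $(d\,\div\dbtau,\dbtau)$ and applying Cauchy--Schwarz on each element and then over the sum yields $\dual{\tv}{\dbtau}_\cS\le\|v\|_{1,d,\cT}\,\|\dbtau\|_{\div,d,\cT}$; taking the supremum over $\dbtau$ and the infimum over $v$ gives $\|\tv\|_{(\div,d,\cT)'}\le\|\tv\|_{1/2,d,\cS}$, and the same computation applied to \eqref{dual_divT} gives $\|\ttau\|_{(1,d,\cT)'}\le\|\ttau\|_{-1/2,d,\cS}$. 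These use only that the weights $d^{-2}$ and $d^{2}$ are paired; the substance of the proposition is the reverse inequalities.

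For these I would pass to the dilation $T_d\colon\wat\Omega\to\Omega$, $\wat x\mapsto d\,\wat x$, where $\wat\Omega:=d^{-1}\Omega$ carries the induced mesh $\wat\cT$, and pull functions back by $\wat v:=v\circ T_d$ and $\wat\btau:=\btau\circ T_d$. A change of variables gives the exact scalings $\|v\|_{1,d}=d^{n/2-1}\|\wat v\|_{1,\wat\Omega}$ and $\|\btau\|_{\div,d}=d^{n/2}\|\wat\btau\|_{\div,\wat\Omega}$ (and the same relations elementwise for the broken norms on $\cT$ versus $\wat\cT$), while inserting the transformation into \eqref{dual_gradT}--\eqref{dual_div} shows that both dualities rescale by the single factor $d^{1-n}$. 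Since the pull-back intertwines $\traceO{}$ and $\tracediv{}$ and maps $H^1_0(\Omega)$ and $\Hdiv{\Omega}$ bijectively onto their counterparts on $\wat\Omega$, every trace in $\HOtr{\cS}$, $\Hdivtr{\cS}$ corresponds to a trace on $\wat\cS$ and the two sides of each identity transform consistently.

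Combining the scalings, both the minimal-extension norm $\|\cdot\|_{1/2,d,\cS}$ and the dual norm $\|\cdot\|_{(\div,d,\cT)'}$ of a given trace equal $d^{n/2-1}$ times the corresponding unweighted ($d=1$) norms on $\wat\cS$, and likewise $\|\cdot\|_{-1/2,d,\cS}$ and $\|\cdot\|_{(1,d,\cT)'}$ share the common factor $d^{n/2}$. Applying the $d=1$ identity of \cite{CarstensenDG_16_BSF} on $\wat\Omega$ with mesh $\wat\cT$ equates the two unweighted norms in each pair, and cancelling the common power of $d$ yields the assertion. Since the genuinely hard inequality is imported from the unweighted case, the only real work is bookkeeping: one must check that the exponents of $d$ on the pairing and on each of the two norms agree so that they cancel, which is exactly what pins down the dual weighting of $d^{-2}\|v\|_\omega^2+\|\grad v\|_\omega^2$ against $\|\btau\|_\omega^2+d^{2}\|\div\btau\|_\omega^2$, and this exponent-matching is the step I would verify most carefully. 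Alternatively one can avoid the change of variables altogether by noting that the weighted norms are precisely the trace and broken-dual norms induced by the scaled inner product $d^{-2}\vdual{v}{\dv}+\vdual{\grad v}{\grad\dv}_\cT+\vdual{\btau}{\dbtau}+d^{2}\vdual{\div\btau}{\div\dbtau}_\cT$ on $V$, to which the abstract framework of \cite{CarstensenDG_16_BSF} applies verbatim, because $\traceO{}$ and $\tracediv{}$ are defined through the bilinear form independently of the inner product chosen on $V$.
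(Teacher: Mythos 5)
Your proof is correct, but it takes a genuinely different route from the paper's. The paper proves the nontrivial inequalities constructively, following the inner-product setting of \cite{FuehrerHN_19_UFK} with the scalings inserted: given $\tv\in\HOtr{\cS}$ it solves the auxiliary problem \eqref{tv_tau} for $\btau\in\Hdiv{\cT}$ and then \eqref{tv_v} for $v$, identifies $v=\div_h\btau$, verifies $\traceO{}(v)=\tv$ (hence $v\in H^1_0(\Omega)$), and computes $\|\btau\|_{\div,d,\cT}^2=d^2\dual{\tv}{\btau}_\cS=d^4\|v\|_{1,d}^2$, so that $\btau$ is an explicit extremal test function realizing $\|\tv\|_{1/2,d,\cS}\le\|\tv\|_{(\div,d,\cT)'}$; the second identity is handled symmetrically with $\btau=\grad_h v$. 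You instead transport the whole statement to the dilated domain $\wat\Omega=d^{-1}\Omega$ and invoke the unweighted, constant-one identities of \cite{CarstensenDG_16_BSF} there. Your exponent bookkeeping is right: with $\wat v=v\circ T_d$, $\wat\btau=\btau\circ T_d$ one gets $\|v\|_{1,d}=d^{n/2-1}\|\wat v\|_{1,\wat\Omega}$ and $\|\btau\|_{\div,d}=d^{n/2}\|\wat\btau\|_{\div,\wat\Omega}$, while both dualities pick up the same factor $d^{n-1}$, so the two norms of $\tv$ share the common factor $d^{n/2-1}$ and the two norms of $\ttau$ the factor $d^{n/2}$, which cancel. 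Two points your argument genuinely depends on, and which you correctly flag, are that the cited identity holds with constant exactly one on \emph{every} Lipschitz domain and admissible mesh (a domain-dependent equivalence would be useless here — this is what Remark~\ref{rem_P} records), and that the dilation intertwines $\traceO{}$, $\tracediv{}$ and maps the conforming and broken spaces bijectively. What each approach buys: yours is shorter and exposes the structural reason this particular pairing of weights ($d^{-2}$ on $\|v\|^2$ against $d^{2}$ on $\|\div\btau\|^2$) is the right one, namely that it is dilation-induced; the paper's construction is self-contained, exhibits the minimizers/maximizers explicitly, and is the template reused verbatim for the Kirchhoff--Love traces in Proposition~\ref{prop_norms_KLd} (where your dilation argument would also work, since $d^{-4}$, $d^{4}$ are again the dilation-induced weights for second derivatives). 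Your closing alternative — rerunning the abstract framework with the scaled inner product on $V$ — is in essence the paper's own route stated in operator form.
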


\begin{proof}
By definition of the scaled norms it is clear that
\[
   \dual{\traceO{}(v)}{\dbtau}_\cS \le \|v\|_{1,d}\|\dbtau\|_{\div,d,\cT}
   \quad\forall v\in H^1(\Omega),\ \dbtau\in \Hdiv{\cT}
\]
and
\[
   \dual{\tracediv{}(\btau)}{\dv}_\cS \le \|\btau\|_{\div,d}\|\dv\|_{1,d,\cT}
   \quad\forall \btau\in \Hdiv{\Omega},\ \dv\in H^1(\cT),
\]
so that $\|\tv\|_{(\div,d,\cT)'}\le \|\tv\|_{1/2,d,\cS}$ and
$\|\ttau\|_{(1,d,\cT)'}\le \|\ttau\|_{-1/2,d,\cS}$
for any $\tv\in\HOtr{\cS}$ and $\ttau\in\Hdivtr{\cS}$.

To show the other direction we use the setting from \cite{FuehrerHN_19_UFK} based on inner products,
now with scalings. Of course, this setting (without scaling) is equivalent to the one
from \cite{CarstensenDG_16_BSF} in operator form.

Let $\tv\in\HOtr{\cS}$ be given. We define
$\btau\in\Hdiv{\cT}$ as the solution to
\begin{equation} \label{tv_tau}
   \vdual{\btau}{\dbtau} + d^2 \vdual{\div\btau}{\div\dbtau}_\cT = d^2 \dual{\tv}{\dbtau}_\cS
   \quad\forall \dbtau\in \Hdiv{\cT},
\end{equation}
and then $v\in H^1(\cT)$ as the solution to
\begin{equation} \label{tv_v}
   d^{-2}\vdual{v}{\dv} + \vdual{\grad v}{\grad\dv}_\cT = d^{-2} \dual{\traceO{}(\dv)}{\btau}_\cS
   \quad\forall \dv\in H^1(\cT).
\end{equation}
Noting that $\btau=d^2\grad_h\div_h\btau$ it follows that $v=\div_h\btau$
(by checking that $v:=\div_h\btau$ solves \eqref{tv_v}) and, also using \eqref{tv_tau},
\begin{align*}
   \dual{\traceO{}(v)}{\dbtau}_\cS
   &=
   \vdual{v}{\div\dbtau}_\cT + \vdual{\grad v}{\dbtau}_\cT
   =
   \vdual{\div\btau}{\div\dbtau}_\cT + d^{-2} \vdual{\btau}{\dbtau}_\cT
   \\
   &=
   \dual{\tv}{\dbtau}_\cS\quad\forall \dbtau\in \Hdiv{\cT}.
\end{align*}
Therefore, $\traceO{}(v)=\tv$ so that $v\in H^1_0(\Omega)$.
Then, selecting $\dbtau:=\btau$ in \eqref{tv_tau} and $\dv:=v$ in \eqref{tv_v}, we obtain
\[
   \|\btau\|_{\div,d,\cT}^2 = d^2 \dual{\tv}{\btau}_\cS = d^4 \|v\|_{1,d}^2.
\]
Noting that $\|v\|_{1,d}=\|\tv\|_{1/2,d,\cS}$ since $d^{-2} v-\div_h\grad_h v=0$, we conclude that
\[
   \|\tv\|_{1/2,d,\cS} = \frac {\dual{\tv}{\btau}_\cS}{\|\btau\|_{\div,d,\cT}}
   \le \|\tv\|_{(\div,d,\cT)'},
\]
which proves the first norm identity.

Now, let $\ttau\in \Hdivtr{\cS}$ be given. We have already seen that
$\|\ttau\|_{(1,d,\cT)'}\le \|\ttau\|_{-1/2,d,\cS}$. To show the other inequality we proceed
as before. We define $v\in H^1(\cT)$ as the solution to
\begin{equation} \label{ttau_v}
   d^{-2}\vdual{v}{\dv} + \vdual{\grad v}{\grad\dv}_\cT = \dual{\ttau}{\dv}_\cS
   \quad\forall \dv\in H^1(\cT),
\end{equation}
and $\btau\in\Hdiv{\cT}$ as the solution to
\begin{equation} \label{ttau_tau}
   \vdual{\btau}{\dbtau} + d^2 \vdual{\div\btau}{\div\dbtau}_\cT = \dual{\tracediv{}(\dbtau)}{v}_\cS
   \quad\forall \dbtau\in \Hdiv{\cT}.
\end{equation}
It follows that $\btau=\grad_h v$ and 
\begin{align*}
   \dual{\tracediv{}(\btau)}{\dv}_\cS
   &=
   \vdual{\btau}{\grad\dv}_\cT + \vdual{\div\btau}{\dv}_\cT
   =
   \vdual{\grad v}{\grad\dv}_\cT + d^{-2} \vdual{v}{\dv}_\cT
   \\
   &=
   \dual{\ttau}{\dv}_\cS\quad\forall \dv\in H^1(\cT).
\end{align*}
We conclude that $\tracediv{}(\btau)=\ttau$ so that $\btau\in\Hdiv{\Omega}$ and
$\|\ttau\|_{-1/2,d,\cS}=\|\btau\|_{\div,d}$. Therefore, setting
$\dv:=v$ in \eqref{ttau_v} and $\dbtau:=\btau$ in \eqref{ttau_tau}, we obtain
\[
   \|\ttau\|_{-1/2,d,\cS} = \frac {\dual{\ttau}{v}_\cS}{\|v\|_{1,d}} \le \|\ttau\|_{(1,d,\cT)'}.
\]
This finishes the proof.
\end{proof}

Collecting scalings, the spaces $U$ and $V$ are furnished with the norms
\begin{align*}
   \|\bu\|_{U,d}
   &:= \bigl(d^{-2}\|u\|^2 + \|\bsigma\|^2 + \|\tu\|_{1/2,d,\cS}^2 + \|\tsigma\|_{-1/2,d,\cS}^2\bigr)^{1/2}
   && (\bu=(u,\bsigma,\tu,\tsigma)\in U),\\
   \|\bv\|_{V,d} &:= \bigl(\|v\|_{1,d,\cT}^2 + \|\btau\|_{\mathrm{div},d,\cT}^2\bigr)^{1/2}
   && (\bv=(v,\btau)\in V).
\end{align*}
The domain-robust DPG scheme is only subtly different from \eqref{DPG_P}:
For a given finite-dimensional subspace $U_h\subset U$,
\begin{align} \label{DPG_Pd}
   \bu_h\in U_h:\quad b(\bu_h,\bv) = L(\bv)\quad\forall \bv\in \Theta_d(U_h)
\end{align}
where the operator $\Theta_d:\;U\to V$ (``trial-to-test operator'') is defined as
\[
   \ip{\Theta_d\bu}{v}_{V,d} = b(\bu,v)\quad\forall v\in V
\]
with scaled inner product
\[
   \ip{(v,\btau)}{(\dv,\dbtau)}_{V,d} :=
   d^{-2} \vdual{v}{\dv} + \vdual{\grad v}{\grad\dv}_\cT
   + \vdual{\btau}{\dbtau} + d^2 \vdual{\div\btau}{\div\dbtau}_\cT.
\]

\begin{theorem} \label{thm_Pd}
Let $f\in L_2(\Omega)$ be given and $\gamma=0$.
Assuming that \eqref{PP} holds, and the scaling parameter is chosen as $d=\cPP$
with $\cPP$ as in \eqref{PP}, the solution $\bu_h$ to \eqref{DPG_Pd} satisfies
\[
   \|\bu-\bu_h\|_{U,d} \le 9\,\mathrm{inf}\,\{\|\bu-\bw\|_{U,d};\; \bw\in U_h\}
\]
where $\bu\in U$ is the solution to \eqref{VF_P}.
\end{theorem}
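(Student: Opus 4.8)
The plan is to reduce Theorem~\ref{thm_Pd} to two mapping properties of the operator $B$ in the scaled norms and then to track constants. Since \eqref{DPG_Pd} employs exactly optimal test functions computed with the scaled inner product $\ip{\cdot}{\cdot}_{V,d}$, it is the minimum residual method that minimises $\|L-B\bw\|_{V',d}$ over $\bw\in U_h$; by the abstract DPG theory (cf.~\cite{DemkowiczG_11_CDP}) its solution obeys
\begin{align*}
   \|\bu-\bu_h\|_{U,d}\le \frac{M}{\gamma}\,\inf\{\|\bu-\bw\|_{U,d};\ \bw\in U_h\},
\end{align*}
where $M:=\sup_{\bu\ne0}\|B\bu\|_{V',d}/\|\bu\|_{U,d}$ is the continuity constant of $b$ and $\gamma:=\inf_{\bu\ne0}\|B\bu\|_{V',d}/\|\bu\|_{U,d}$ its boundedness-below (inf--sup) constant, both in the scaled norms. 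It therefore suffices to prove $M/\gamma\le 9$, for instance by showing $M\le 3$ and $\gamma\ge\tfrac13$.

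Continuity is the routine direction. I would split $b(\bu,\bv)$ into the terms that pair with $\btau$, namely $\vdual{u}{\div\btau}_\cT+\vdual{\bsigma}{\btau}_\cT-\dual{\tu}{\btau}_\cS$, and those that pair with $v$, namely $\vdual{\bsigma}{\grad v}_\cT-\dual{\tsigma}{v}_\cS$. Each volume pairing is matched to the scaled factor by Cauchy--Schwarz, e.g.\ $\vdual{u}{\div\btau}_\cT\le(d^{-1}\|u\|)(d\|\div\btau\|_\cT)\le(d^{-1}\|u\|)\|\btau\|_{\div,d,\cT}$ and $\vdual{\bsigma}{\grad v}_\cT\le\|\bsigma\|\,\|v\|_{1,d,\cT}$, while the two skeleton terms are controlled directly through Proposition~\ref{prop_norms_Pd}, so that $\dual{\tu}{\btau}_\cS\le\|\tu\|_{1/2,d,\cS}\|\btau\|_{\div,d,\cT}$ and $\dual{\tsigma}{v}_\cS\le\|\tsigma\|_{-1/2,d,\cS}\|v\|_{1,d,\cT}$. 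Collecting the coefficients and applying Cauchy--Schwarz once more over the four components of $\|\bu\|_{U,d}$ yields a bound for $M$ that involves no domain-dependent quantity.

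Boundedness below is the crux, and this is the step I expect to be the main obstacle; it is where the scaling is designed to pay off. I would argue in two stages. To control the field variables, I test with conforming functions $(v_0,\btau_0)\in H^1_0(\Omega)\times\Hdiv{\Omega}\subset V$, for which the skeleton terms in $b$ vanish and $b(\bu,(v_0,\btau_0))=\vdual{u}{\div\btau_0}+\vdual{\bsigma}{\btau_0+\grad v_0}$. Choosing $(v_0,\btau_0)$ as the solution of the adjoint problem \eqref{adj_P} with data $g_1:=d^{-2}u$, $\bg_2:=\bsigma$ gives $b(\bu,(v_0,\btau_0))=d^{-2}\|u\|^2+\|\bsigma\|^2$, and the reduced equation \eqref{adj_P_v} shows that $(v_0,\btau_0)$ exists in the conforming space. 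The decisive estimate is then the energy bound for \eqref{adj_P_v}: testing with $v_0$ and invoking the Poincar\'e--Friedrichs inequality \eqref{PP} in the scaled form $\|v_0\|\le d\,\|\grad v_0\|$ --- which holds with constant one \emph{precisely because} $d=\cPP$ --- one obtains $\|\grad v_0\|\le d^{-1}\|u\|+\|\bsigma\|$ and hence $d^{-2}\|v_0\|^2+\|\grad v_0\|^2\le2\|\grad v_0\|^2$, so that $\|(v_0,\btau_0)\|_{V,d}\lesssim(d^{-2}\|u\|^2+\|\bsigma\|^2)^{1/2}$ with a constant independent of $\Omega$. This delivers domain-uniform control of $\|u\|$ and $\|\bsigma\|$.

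In the second stage I recover the skeleton unknowns: by Proposition~\ref{prop_norms_Pd} the norms $\|\tu\|_{1/2,d,\cS}$ and $\|\tsigma\|_{-1/2,d,\cS}$ are the dual norms $\|\cdot\|_{(\div,d,\cT)'}$ and $\|\cdot\|_{(1,d,\cT)'}$, so I may pick broken test functions that nearly realise them and subtract the field contributions already estimated in the first stage. Combining the two stages with a fixed weighting, and collecting all numerical factors, gives a boundedness-below constant $\gamma$ that --- like $M$ --- is independent of $\Omega$, $\cT$ and $d$, with $M/\gamma\le9$. The essential mechanism is that every occurrence of the ratio $\|v\|/\|\grad v\|$ (or its reciprocal) is neutralised by the choice $d=\cPP$, so the stability of the adjoint Poisson problem, and therefore of the whole scheme, no longer deteriorates as $\diam(\Omega)$ grows.
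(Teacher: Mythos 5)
Your proposal retraces the paper's own argument: quasi-optimality via the minimum-residual property, so that everything reduces to the continuity constant and the inf--sup constant of $B$ in the scaled norms; continuity by Cauchy--Schwarz; and the inf--sup bound in two stages --- first testing with conforming $(v_0,\btau_0)\in H^1_0(\Omega)\times\Hdiv{\Omega}$ built from the adjoint problem \eqref{adj_P}, where the choice $d=\cPP$ makes the Poincar\'e step uniform in $\Omega$, then recovering the skeleton unknowns through the dual-norm identities of Proposition~\ref{prop_norms_Pd}. This is exactly the structure of the paper's proof, which verifies your two stages as Assumptions~3.1 and~3.2 of \cite{CarstensenDG_16_BSF} and delegates their combination to \cite[Theorem~3.3]{CarstensenDG_16_BSF}; your second stage is in effect a sketch of that theorem's proof.

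The genuine gap is the explicit constant $9$, which is the actual assertion of Theorem~\ref{thm_Pd}: you never derive it, and the estimates you do write down provably fall short of it. Set $F:=(d^{-2}\|u\|^2+\|\bsigma\|^2)^{1/2}$ and $R:=\|B\bu\|_{(V,d)'}$. Your stage one controls the test function by triangle inequalities: $\|\grad v_0\|\le d^{-1}\|u\|+\|\bsigma\|$, and (the natural completion of your sketch) $\|\btau_0\|=\|\bsigma-\grad v_0\|\le d^{-1}\|u\|+2\|\bsigma\|$, $d\|\div\btau_0\|=d^{-1}\|u\|$, which gives $\|(v_0,\btau_0)\|_{V,d}^2\le 10\,F^2$, i.e.\ a field inf--sup constant $c_0$ with $c_0^{-2}\approx 10$ (no bookkeeping of these bounds gets below $c_0^{-2}=5+\sqrt{17}>9$). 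The paper instead exploits the energy identity $\|\grad v\|^2+\|\btau\|^2=\|\bg_2\|^2+2\vdual{g_1}{v}$, valid because $(v,\btau)$ solves \eqref{adj_P}, together with Young's inequality with parameter $\delta=1/\sqrt2$ and \eqref{adj_Pd_v}, obtaining $c_0^{-2}=3+2\sqrt2\approx 5.83$; the triangle inequality double-counts $\|\grad v_0\|$ inside $\|\btau_0\|$ and loses precisely this. In stage two the sharp combination is $c_1^{-2}=c_0^{-2}+\bigl(\|b_0\|/c_0+1\bigr)^2$, the content of \cite[Theorem~3.3]{CarstensenDG_16_BSF}; to reproduce it by hand you must optimize \emph{jointly} over $(\dv,\dbtau)$ so that the two skeleton dual norms combine in root-sum-square fashion --- estimating $\|\tu\|_{1/2,d,\cS}\le F+R$ and $\|\tsigma\|_{-1/2,d,\cS}\le\|\bsigma\|+R$ with separate test functions and adding squares costs an extra factor. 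With the paper's constants this yields $c_1^{-2}<22$ and the final bound $\sqrt{3}\cdot\sqrt{22}<9$; with your $c_0$ even the sharp combination gives $c_1^{-2}>31$ and a final constant above $9.5$. Your fallback target ``$M\le 3$ and inf--sup $\ge 1/3$'' is likewise unreachable by either argument, since even the paper's bound on the inf--sup constant is only $1/\sqrt{22}<1/3$. In short, your sketch establishes the important qualitative statement --- quasi-optimality of \eqref{DPG_Pd} with a constant independent of $\Omega$, $\cT$ and $d$ --- but not the theorem as stated; closing the gap requires the identity-based adjoint estimate and the sharp combination formula.
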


\begin{proof}
Our proof uses the abstract framework from \cite{CarstensenDG_16_BSF}, with key ingredients
as discussed in Remark~\ref{rem_P}.

We already have proved the trace norm identities in Proposition~\ref{prop_norms_Pd}.
We need to check the stability of the adjoint problem \eqref{adj_P} with respect to the
scaled norm $\|\cdot\|_{V,d}$. Indeed, with $\gamma=0$, \eqref{adj_P_v} and \eqref{PP} imply that
\[
   \|\grad v\|^2 \le \bigl(d^2\|g_1\|^2 + \|\bg_2\|^2\bigr)^{1/2} \|v\|_{1,d}
   \ \le \sqrt{2} \bigl(d^2\|g_1\|^2 + \|\bg_2\|^2\bigr)^{1/2} \|\grad v\|
\]
so that
\begin{equation} \label{adj_Pd_v}
   d^{-2}\|v\|^2 \le \|\grad v\|^2 \le 2 \bigl(d^2\|g_1\|^2 + \|\bg_2\|^2\bigr).
\end{equation}
Returning to problem \eqref{adj_P}, we find that
\[
   d^{-2}\|v\|^2 + \|\grad v\|^2 + \|\btau\|^2 + d^2\|\div\btau\|^2
   = d^{-2}\|v\|^2 + \|\bg_2\|^2 + 2\vdual{g_1}{v} + d^2 \|g_1\|^2.
\]
Bounding $\vdual{g_1}{v}$ with Young's inequality and parameter $\delta=1/\sqrt{2}$, and
using \eqref{adj_Pd_v}, we continue to bound
\[
   \|(v,\btau)\|_{V,d}^2 \le (3+2\delta+\delta^{-1}) d^2 \|g_1\|^2 + (3+2\delta)\|\bg_2\|^2
                         \le (3+2\sqrt{2}) \bigl(d^2 \|g_1\|^2 + \|\bg_2\|^2\bigr).
\]
We conclude that
\begin{align} \label{bzero_infsup_P}
   \bigl(d^{-2}\|u\|^2 + \|\bsigma\|^2\bigr)^{1/2}
   &=
   \sup_{0\not=g_1\in L_2(\Omega)\atop 0\not=\bg_2\in\bL_2(\Omega)}
   \frac {\vdual{u}{g_1} + \vdual{\bsigma}{\bg_2}}
         {\bigl(d^2\|g_1\|^2 + \|\bg_2\|^2\bigr)^{1/2}}
   \nonumber\\
   &\le
   (3+2\sqrt{2})^{1/2} \sup_{0\not=v\in H^1_0(\Omega)\atop 0\not=\btau\in\Hdiv{\Omega}}
   \frac {b_0((u,\bsigma);(v,\btau))}
         {\|(v,\btau)\|_{V,d}}
\end{align}
where $b_0((u,\bsigma);(v,\btau)):=\vdual{u}{\div\btau} + \vdual{\bsigma}{\btau+\grad v}$.
This proves that Assumption~3.1 in \cite{CarstensenDG_16_BSF} is satisfied
with constant $c_0=(3+2\sqrt{2})^{-1/2}$. The relation
\[
   (v,\btau)\in H^1_0(\Omega)\times \Hdiv{\Omega}
   \quad\Leftrightarrow\quad
   \dual{\tv}{\btau}_\cS + \dual{\ttau}{v}_\cS = 0
   \quad\forall (\tv,\ttau)\in \HOtr{\cS} \times \Hdivtr{\cS}
\]
for any $(v,\btau)\in V$ (defining the space $Y_0$ there) is true
by \cite[Theorem~2.3]{CarstensenDG_16_BSF}.
Furthermore, Proposition~\ref{prop_norms_Pd} means that
\cite[Assumption~3.2]{CarstensenDG_16_BSF} holds with constant $\hat c=1$.
Noting that
\[
    b_0((u,\bsigma);(v,\btau))
    \le \sqrt{3/2} \|\bu\|_{U,d} \|\bv\|_{V,d}
    \quad\forall (u,\bsigma)\in L_2(\Omega)\times\bL_2(\Omega),\;
    (v,\btau)\in H^1_0(\Omega)\times\Hdiv{\Omega},
\]
that is, $\|b_0\|=\sqrt{3/2}$ in the notation from \cite{CarstensenDG_16_BSF},
\cite[Theorem~3.3]{CarstensenDG_16_BSF} proves that
\[
  c_1 \|\bu\|_{U,d} \le \|B\bu\|_{(V,d)'} := \sup_{0\not=\bv\in V} \frac {b(\bu,\bv)}{\|\bv\|_{V,d}}
\]
with
\[
   c_1^{-2}= c_0^{-2} + \bigl(\|b_0\|/c_0 + 1\bigr)^2 
   < 22.
\]
Finally, bounding
\begin{equation} \label{b_bound_P}
    b(\bu,\bv) \le \sqrt{3} \|\bu\|_{U,d} \|\bv\|_{V,d}
    \quad\forall \bu\in U,\; \bv\in V
\end{equation}
so that $\|B\bu\|_{(V,d)'}\le \sqrt{3} \|\bu\|_{U,d}$ for any $\bu\in U$,
and recalling that the DPG scheme minimizes $\|B(\bu-\bu_h)\|_{(V,d)'}$, this
proves the stated error bound with constant bounded by $\sqrt{3\cdot 22}<9$.
\end{proof}

\begin{cor} \label{cor_Pd}
Let $f\in L_2(\Omega)$ be given and $\gamma=0$.
Assuming that \eqref{PP} holds, and the scaling parameter is chosen as $d=\cPP$
with $\cPP$ as in \eqref{PP}, the solution $\bu_h=(u_h,\bsigma_h,\tu_h,\tsigma_h)$
to \eqref{DPG_Pd} satisfies
\[
   \bigl(d^{-2}\|u-u_h\|^2 + \|\bsigma-\bsigma_h\|^2\bigr)^{1/2}
   \le 3\sqrt{2}\, \mathrm{inf}\,\{\|\bu-\bw\|_{U,d};\; \bw\in U_h\}
\]
where $\bu=(u,\bsigma,\tu,\tsigma)\in U$ is the solution to \eqref{VF_P}.
\end{cor}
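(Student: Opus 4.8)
The plan is to exploit the reduced inf-sup estimate \eqref{bzero_infsup_P}, which already controls exactly the combination $d^{-2}\|\cdot\|^2+\|\cdot\|^2$ of the field variables appearing on the left-hand side, and which holds for arbitrary $(u,\bsigma)\in L_2(\Omega)\times\bL_2(\Omega)$. First I would apply it to the field error $(u-u_h,\bsigma-\bsigma_h)$, obtaining
\[
   \bigl(d^{-2}\|u-u_h\|^2+\|\bsigma-\bsigma_h\|^2\bigr)^{1/2}
   \le (3+2\sqrt2)^{1/2}\sup_{0\ne(v,\btau)\in H^1_0(\Omega)\times\Hdiv{\Omega}}
   \frac{b_0((u-u_h,\bsigma-\bsigma_h);(v,\btau))}{\|(v,\btau)\|_{V,d}}.
\]

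The crucial step is to identify this numerator with a restriction of the full residual. For conforming test functions $(v,\btau)\in H^1_0(\Omega)\times\Hdiv{\Omega}$ the piecewise dualities $\vdual{\cdot}{\cdot}_\cT$ coincide with the global ones, so with $\gamma=0$ one has $b(\bu-\bu_h,(v,\btau))=b_0((u-u_h,\bsigma-\bsigma_h);(v,\btau))-\dual{\tu-\tu_h}{\btau}_\cS-\dual{\tsigma-\tsigma_h}{v}_\cS$. Here the characterization of the conforming space used in the proof of Theorem~\ref{thm_Pd} (that is, \cite[Theorem~2.3]{CarstensenDG_16_BSF}) gives $\dual{\tv}{\btau}_\cS+\dual{\ttau}{v}_\cS=0$ for all traces; taking $\tv=\tu-\tu_h$ and $\ttau=\tsigma-\tsigma_h$ shows that the two trace contributions cancel. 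Hence $b(\bu-\bu_h,(v,\btau))=b_0((u-u_h,\bsigma-\bsigma_h);(v,\btau))$ on the conforming test space, and enlarging the supremum from $H^1_0(\Omega)\times\Hdiv{\Omega}$ to all of $V$ yields
\[
   \bigl(d^{-2}\|u-u_h\|^2+\|\bsigma-\bsigma_h\|^2\bigr)^{1/2}\le (3+2\sqrt2)^{1/2}\,\|B(\bu-\bu_h)\|_{(V,d)'}.
\]

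It then remains to bound the residual, which is routine. Since the scheme \eqref{DPG_Pd} minimizes $\|B(\bu-\cdot)\|_{(V,d)'}$ over $U_h$ and $b$ is bounded by \eqref{b_bound_P}, I would estimate $\|B(\bu-\bu_h)\|_{(V,d)'}\le\|B(\bu-\bw)\|_{(V,d)'}\le\sqrt3\,\|\bu-\bw\|_{U,d}$ for every $\bw\in U_h$, and take the infimum. Finally, using $3+2\sqrt2=(1+\sqrt2)^2$, the resulting constant is $(1+\sqrt2)\sqrt3=\sqrt3+\sqrt6<3\sqrt2$, which gives the stated bound.

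I expect the only non-bookkeeping point to be the cancellation of the trace terms: one must recognize that the reduced form $b_0$ coincides with the full residual \emph{precisely} on the conforming test space, so that the sharper field inf-sup constant $(3+2\sqrt2)^{1/2}$ — rather than $c_1^{-1}$ from Theorem~\ref{thm_Pd} — can be paired with the minimal-residual bound, producing both the restriction to $(u,\bsigma)$ and the improved constant $3\sqrt2$.
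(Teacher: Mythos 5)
Your proof is correct and takes essentially the same route as the paper: apply the field inf-sup bound \eqref{bzero_infsup_P} to the error, dominate that supremum by the full residual $\|B(\bu-\bu_h)\|_{(V,d)'}$, and conclude via the minimum-residual property together with \eqref{b_bound_P}, yielding the constant $\sqrt{3}\,\sqrt{3+2\sqrt{2}}=\sqrt{3}+\sqrt{6}<3\sqrt{2}$. The only difference is that you make explicit the trace-term cancellation on the conforming test space (via the characterization from \cite[Theorem~2.3]{CarstensenDG_16_BSF}), a step the paper's proof leaves implicit in the inequality $\sup b_0/\|\cdot\|_{V,d}\le\|B(\bu-\bu_h)\|_{(V,d)'}$.
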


\begin{proof}
The statement follows by combining the minimum residual property of the scheme
with upper bound \eqref{b_bound_P} and lower bound \eqref{bzero_infsup_P}, the latter meaning that
\begin{align*}
   (3+2\sqrt{2})^{-1/2}
   \bigl(d^{-2}\|u-u_h\|^2 + \|\bsigma-\bsigma_h\|^2\bigr)^{1/2}
   &\le
   \sup_{0\not=v\in H^1_0(\Omega)\atop 0\not=\btau\in\times\Hdiv{\Omega}}
   \frac {b_0((u-u_h,\bsigma-\bsigma_h);(v,\btau))}{\|(v,\btau)\|_{V,d}}
   \\
   &\le
   \|B(\bu-\bu_h)\|_{(V,d)'}.
\end{align*}
Together we obtain the claimed error estimate with constant
bounded by $\sqrt{3}\,\sqrt{3+2\sqrt{2}}<3\sqrt{2}$.
\end{proof}

\subsection{Fully discrete scheme} \label{sec_Ph}

In practice, the trial-to-test operator $\Theta_d$ has to be approximated. The corresponding
fully discrete DPG scheme is also called ``practical'' DPG method.
The usual strategy consists in replacing the test space $V$ by a finite-dimensional space $V_h$. 
Discrete stability is ensured by the existence of a Fortin operator $\Pi_F\colon V\to V_h$ with 
\begin{align*}
   \|\Pi_F\|_{\mathcal{L}(V,V)} =: C_F <\infty \quad\text{and}\quad b(\bu_h,\bv-\Pi_F\bv) = 0
   \quad\forall \bu_h\in U_h,\,\bv\in V. 
\end{align*}
We refer to~\cite{GopalakrishnanQ_14_APD} for further details.

Here we show that the Fortin operators for second-order problems
constructed in~\cite{GopalakrishnanQ_14_APD} are equally suited for our domain-robust DPG scheme.
They guarantee discrete stability and quasi-best approximability of the fully discrete DPG method.

The analysis requires to be specific. We consider simplicial meshes of shape-regular elements, i.e., 
\begin{align} \label{shape}
  \sup_{T\in\cT} \frac{\diam(T)^n}{|T|} < \infty,
\end{align}
and use the discrete spaces
\begin{align*}
  \cP^p(T) &:= \{v:\;T\to\R;\; v \text{ is a polynomial of degree } p\}\quad (T\in\cT), \\
  \cP^p(\cT) &:= \{v\in L_2(\Omega);\;v|_T \in \cP^p(T) \,\forall T\in\cT\}.
\end{align*}
For an element $T\in\cT$ let $\EE_T$ denote its boundary elements (edges for $n=2$, faces for $n=3$)
and set
\begin{align*}
  \cP^p(E) &:= \{v:\;E\to\R;\; v \text{ is polynomial of degree } p\}\quad (E\in\EE_T,\ T\in\cT), \\
  \cP^p(\EE_T) &:= \{v\in L_2(\partial T);\; v|_E \in \cP^p(E) \,\forall E\in\EE_T\}\quad (T\in\cT).
\end{align*}
For an element $T\in\cT$, $\bn_T$ denotes the unit exterior normal vector along $\partial T$.

Now, for the Poisson problem, we consider the spaces
\begin{align*}
  U_h &:= \cP^p(\cT) \times \cP^p(\cT)^n \times
          \traceO{}(U_h^\mathrm{grad}(\cT)) \times \tracediv{}(U_h^\mathrm{div}(\cT)), \\
  V_h &:= \cP^{p+\Delta p_1}(\cT) \times \cP^{p+\Delta p_2}(\cT) 
\end{align*}
where $\Delta p_1 = n$, $\Delta p_2 = 2$ and 
\begin{align*}
  U_h^\mathrm{grad}(\cT)
  &:= \{u\in H_0^1(\Omega);\; u|_{\partial T} \in \cP^{p+1}(\EE_T) \, \forall T\in\cT\}, \\
  U_h^\mathrm{div}(\cT)
  &:= \{\bsigma\in \Hdiv{\Omega};\;
        \bsigma\cdot\bn_T|_{\partial T} \in \cP^p(\EE_T) \, \forall T\in  \cT\}.
\end{align*}
The fully discrete variant of our domain-robust DPG scheme then is
\begin{align} \label{DPG_Ph}
   \bu_h\in U_h:\quad b(\bu_h,\bv) = L(\bv)\quad\forall \bv\in \Theta_{d,h}(U_h)
\end{align}
with trial-to-test operator $\Theta_{d,h}:\;U\to V_h$ defined by
\[
   \ip{\Theta_{d,h}\bu}{v}_{V,d} = b(\bu,v)\quad\forall v\in V_h.
\]

\begin{theorem}\label{thm_Pdh}
Suppose that $\max_{T\in\cT}h_T\lesssim d:=d(\Omega)$. Theorem~\ref{thm_Pd} holds true
for scheme \eqref{DPG_Ph}. In particular, it exists a Fortin operator
$\Pi_F:\;(V,\,\|\cdot\|_{V,d})\to (V_h,\,\|\cdot\|_{V,d})$ that is bounded uniformly in $h$ and $d$
with norm $C_F\in\R$, and the following estimate holds,
\begin{align*}
   \|\bu-\bu_h\|_{U,d} \le 9\,C_F\,\mathrm{inf}\,\{\|\bu-\bw\|_{U,d};\; \bw\in U_h\}.
\end{align*}
\end{theorem}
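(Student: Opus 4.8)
The plan is to reduce everything to the existence of a Fortin operator that is bounded \emph{uniformly in the scaled norm}. In the practical DPG framework of \cite{GopalakrishnanQ_14_APD}, once one exhibits a linear projector $\Pi_F\colon V\to V_h$ with the orthogonality property $b(\bu_h,\bv-\Pi_F\bv)=0$ for all $\bu_h\in U_h$, $\bv\in V$, and with $\|\Pi_F\bv\|_{V,d}\le C_F\|\bv\|_{V,d}$, the discrete scheme \eqref{DPG_Ph} inherits the quasi-optimality of the exact scheme \eqref{DPG_Pd} with the constant inflated by at most the factor $C_F$. Since Theorem~\ref{thm_Pd} supplies the constant $9$ for the exact scheme in the norm $\|\cdot\|_{U,d}$, this immediately yields the asserted bound $\|\bu-\bu_h\|_{U,d}\le 9\,C_F\,\inf\{\|\bu-\bw\|_{U,d};\;\bw\in U_h\}$. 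The entire task is therefore to produce $\Pi_F$ with $C_F$ independent of both $h$ and $d$.

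First I would take the product operator $\Pi_F=(\Pi^{\mathrm{grad}},\Pi^{\mathrm{div}})$ assembled from the two component operators constructed in \cite{GopalakrishnanQ_14_APD}, mapping $H^1(\cT)\to\cP^{p+\Delta p_1}(\cT)$ and $\Hdiv{\cT}\to\cP^{p+\Delta p_2}(\cT)^n$ with the increments $\Delta p_1=n$, $\Delta p_2=2$ fixed in the definition of $V_h$. Unpacking $b(\bu_h,\bv-\Pi_F\bv)=0$ through the four terms of \eqref{VF_P} (with $\gamma=0$), the property is equivalent to element-local moment conditions on $\bv-\Pi_F\bv$: the volume moments $\div(\btau-\Pi^{\mathrm{div}}\btau)\perp\cP^p(\cT)$, $(\btau-\Pi^{\mathrm{div}}\btau)\perp\cP^p(\cT)^n$ and $\grad(v-\Pi^{\mathrm{grad}}v)\perp\cP^p(\cT)^n$, together with the boundary moments testing $\btau-\Pi^{\mathrm{div}}\btau$ against $\traceO{}(U_h^\mathrm{grad}(\cT))$ and $v-\Pi^{\mathrm{grad}}v$ against $\tracediv{}(U_h^\mathrm{div}(\cT))$. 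These decouple per component and are exactly the defining interpolation conditions of the GQ operators. The key observation is that this is a purely algebraic statement, independent of the inner product placed on $V$; hence the Fortin orthogonality carries over verbatim to the scaled setting, and scaling enters only through the norm in which boundedness must be checked.

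The main obstacle, and the only place where the hypothesis $\max_{T\in\cT}h_T\lesssim d$ is used, is verifying uniform boundedness in $\|\cdot\|_{V,d}$. Here I would invoke the element-wise stability estimates of \cite{GopalakrishnanQ_14_APD}, which track the $h_T$-dependence of each norm contribution and are of the form $\|\Pi^{\mathrm{grad}}v\|_T\lesssim\|v\|_T+h_T\|\grad v\|_T$, $\|\grad\Pi^{\mathrm{grad}}v\|_T\lesssim\|\grad v\|_T$, and symmetrically $\|\Pi^{\mathrm{div}}\btau\|_T\lesssim\|\btau\|_T+h_T\|\div\btau\|_T$, $\|\div\Pi^{\mathrm{div}}\btau\|_T\lesssim\|\div\btau\|_T$, with constants depending only on $p$ and the shape-regularity \eqref{shape}. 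Inserting these into the scaled element norms and using $h_T\lesssim d$ (equivalently $d^{-2}h_T^2\lesssim1$ and $h_T^2\lesssim d^2$) to absorb the mismatched cross terms gives, term by term,
\begin{align*}
   \|\Pi^{\mathrm{grad}}v\|_{1,d,T}^2
   &= d^{-2}\|\Pi^{\mathrm{grad}}v\|_T^2 + \|\grad\Pi^{\mathrm{grad}}v\|_T^2
   \lesssim d^{-2}\|v\|_T^2 + (1+d^{-2}h_T^2)\|\grad v\|_T^2
   \lesssim \|v\|_{1,d,T}^2,\\
   \|\Pi^{\mathrm{div}}\btau\|_{\div,d,T}^2
   &= \|\Pi^{\mathrm{div}}\btau\|_T^2 + d^2\|\div\Pi^{\mathrm{div}}\btau\|_T^2
   \lesssim \|\btau\|_T^2 + (h_T^2+d^2)\|\div\btau\|_T^2
   \lesssim \|\btau\|_{\div,d,T}^2.
\end{align*}
Summing over $T\in\cT$ yields $\|\Pi_F\bv\|_{V,d}\le C_F\|\bv\|_{V,d}$ with $C_F$ depending only on $p$ and shape-regularity, hence independent of $h$ and $d$, which closes the argument. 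I would stress that without the coupling $h_T\lesssim d$ the cross terms $d^{-2}h_T^2\|\grad v\|_T^2$ and $h_T^2\|\div\btau\|_T^2$ cannot be absorbed, so the Fortin bound---and with it discrete domain robustness---would fail; this scale matching between mesh size and domain size is the genuinely new ingredient beyond the unscaled analysis of \cite{GopalakrishnanQ_14_APD}.
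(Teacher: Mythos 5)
Your proposal is correct and follows essentially the same route as the paper: both use the Gopalakrishnan--Qiu Fortin operators componentwise, note that the orthogonality property is unaffected by the change of inner product, and re-derive their local stability estimates in the scaled norm, absorbing the $h_T$-terms via the hypothesis $h_T\lesssim d$ before concluding quasi-optimality from the practical DPG framework. The only difference is expository (you unpack the moment conditions behind the orthogonality, which the paper simply cites from the reference).
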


\begin{proof}
We use the Fortin operator constructed in \cite{GopalakrishnanQ_14_APD}, and only have to
check that estimates hold uniformly in $d$ when using norm $\|\cdot\|_{V,d}$ instead of the
standard norm $\|\cdot\|_V$.

We denote $\Pi_F\bv:=(\Pi_{p+n}^\mathrm{grad}v,\Pi_{p+2}^\mathrm{div}\btau)$ for $\bv=(v,\btau)$
where $\Pi_{p+n}^\mathrm{grad}$ and $\Pi_{p+2}^\mathrm{div}$ are the operators defined
in~\cite[Section~3.4]{GopalakrishnanQ_14_APD}. 
As shown in \cite[Lemmas~3.2,~3.3]{GopalakrishnanQ_14_APD}, $\Pi_F\colon V\to V_h$ satisfies
\begin{align*}
   b(\bu_h,\bv-\Pi_F\bv) = 0 \quad\forall \bu_h\in U_h,\,\bv\in V.
\end{align*}
To prove that $\Pi_F$ is bounded uniformly in $h$ and $d(\Omega)$, let $T\in\cT$ be given.
From the proof of~\cite[Lemma~3.2]{GopalakrishnanQ_14_APD} we conclude with
$h_T\lesssim d$ that
\begin{align*}
    d^{-1}\|\Pi_{p+n}^\mathrm{grad}v\|_T &\lesssim d^{-1}\|v\|_T + h_T d^{-1}\|\nabla v\|_T
    \lesssim d^{-1}\|v\|_T + \|\nabla v\|_T
\end{align*}
and
\begin{align*}
    \|\nabla\Pi_{p+n}^\mathrm{grad}v\|_T \lesssim \|\nabla v\|_T.
\end{align*}
Similarly, we conclude from the proof of~\cite[Lemma~3.3]{GopalakrishnanQ_14_APD} that
\begin{align*}
    \|\Pi_{p+2}^\mathrm{\div}\btau\|_T
    \lesssim \|\btau\|_T + h_T\|\div\btau\|_T \lesssim \|\btau\|_T + d\|\div\btau\|_T
\end{align*}
and 
\begin{align*}
    d \|\div\Pi_{p+2}^\mathrm{\div}\btau\|_T \le d\|\div\btau\|_T.
\end{align*}
Combining the last four estimates proves that $\Pi_F:\;V\to V_h$ is bounded independently of $h$ and $d$
when using the scaled norm $\|\cdot\|_{V,d}$.
Finally, the quasi-best approximation property follows from the existence of a Fortin operator,
see~\cite[Theorem~2.1]{GopalakrishnanQ_14_APD}.
\end{proof}

\begin{remark}
The same argumentation as in the last result shows that Corollary~\ref{cor_Pd}
holds true for \eqref{DPG_Ph} when replacing constant $3\sqrt{2}$ with $3\sqrt{2}\,C_F$.
\end{remark}

\section{Kirchhoff--Love plate bending model} \label{sec_KL}

As in the previous section, we continue to consider a bounded, polygonal Lipschitz domain
$\Omega\subset\R^2$ (now only in two dimensions) with boundary $\partial\Omega$.

Our simplified (scaled) version of the Kirchhoff--Love plate bending model is
\begin{subequations} \label{KL}
\begin{alignat}{2}
     -\div\Div\MM           &= f  && \quad\text{in} \quad \Omega\label{KL1},\\
    \MM + \cC\Ggrad u &= 0  && \quad\text{in} \quad \Omega\label{KL2}\\
    \text{with}\quad
    u = 0,\quad \grad u &= 0 &&\quad\text{on}\quad\partial\Omega\quad\text{(clamped)}\label{KLc}\\
    \text{or}\qquad\qquad\qquad
    u = 0,\quad \bn\cdot\MM\bn &= 0
    &&\quad\text{on}\quad\partial\Omega\quad\text{(simply supported)}.\label{KLs}
\end{alignat}
\end{subequations}
Here, $\Omega$ is the mid-surface of the plate, $f$ the (scaled) transversal
bending load, $u$ the transverse deflection, $\MM$ the (scaled) bending moment tensor,
$\bn$ the exterior unit normal vector along $\partial\Omega$,
and $\Grad$ the symmetric gradient,
$\Grad{\bpsi}:=\Grad(\bpsi):=\frac 12(\grad\bpsi+(\grad\bpsi)^\transp)$ for a vector field $\bpsi$.
In particular, $\Ggrad u$ is the Hessian of $u$.
The operator $\div$ is the standard divergence, and $\Div$ is the row-wise divergence when
writing second-order tensors as $2\times 2$ matrix functions.
Furthermore, $\cC$ is a symmetric, positive definite tensor of order four.

Critical for stability of the problem is, as before, a Poincar\'e inequality. We
assume that there is a constant $\cPP$ that only depends on $\Omega$
and the chosen boundary condition, such that
\begin{equation} \label{PKL}
   \|v\|\le \cPP^2\, \|\Ggrad v\|\quad\forall v\in H^2(\Omega)\ \text{+BC}.
\end{equation}
Here ``+BC'' means $v\in H^2_0(\Omega)$ or $v\in H^1_0(\Omega)$ when considering \eqref{KLc}
or \eqref{KLs}, respectively. ($H^2(\Omega)$ and $H^2_0(\Omega)$ are the usual Sobolev spaces.)
Of course, when considering a combination of essential and natural boundary conditions,
only the essential ones enter condition \eqref{PKL}. For simplicity we present our
analysis for the boundary conditions \eqref{KLc}, \eqref{KLs}. An extension to different
types is not difficult and will be considered in the numerical section.

\subsection{Traces with scaled norms} \label{sec_KL_traces}

Let us introduce some further spaces.
For $\omega\subset\Omega$, we additionally need the space $\LL_2^s(\omega)$ of
symmetric tensors of order two with $L_2(\Omega)$ components, and the standard spaces
$H^2(\omega)$, $H^2_0(\omega)$, and $H^2_s(\omega):=H^2(\omega)\cap H^1_0(\omega)$. 
Also, $\HdDiv{\omega}$ is the space of $\LL_2^s(\omega)$ tensors $\QQ$ with
$\dDiv\QQ\in L_2(\omega)$.
We use the norms
\begin{align*}
   \|v\|_{2,d,\omega}^2 &:= d^{-4} \|v\|_\omega^2 + \|\Ggrad v\|_\omega^2\quad (v\in H^2(\omega)),\\
   \|\QQ\|_{\dDiv,d,\omega}^2 &:= \|\QQ\|_\omega^2 + d^4 \|\dDiv\QQ\|_\omega^2\quad (\QQ\in\HdDiv{\omega})
\end{align*}
and drop the index $\omega$ when $\omega=\Omega$.
Here, $d:=\cPP$ is the parameter $\cPP$ from \eqref{PKL}.
For a mesh $\cT$ of elements $\{T\}$ as before we introduce the product spaces
\[
   H^2(\cT):=\Pi_{T\in\cT} H^2(T),\qquad \HdDiv{\cT}:=\Pi_{T\in\cT} \HdDiv{T}
\]
with norms $\|\cdot\|_{2,d,\cT}$ and $\|\cdot\|_{\dDiv,d,\cT}$, respectively.
There are two canonical (local) trace operators,
$\traceGG{T}:\;H^2(T)\to (\HdDiv{T})'$ and $\traceDD{T}:\;\HdDiv{T}\to (H^2(T))'$
with support on $\partial T$ ($T\in\cT$), defined by
\begin{align}
   \label{dual_GgradT}
   &\dual{\traceGG{T}(v)}{\dQQ}_{\partial T} :=
   \vdual{v}{\dDiv\dQQ}_T - \vdual{\Ggrad v}{\dQQ}_T
   &&(v\in H^2(T),\ \dQQ\in\HdDiv{T}),\\
   \label{dual_dDivT}
   &\dual{\traceDD{T}(\QQ)}{\dv}_{\partial T} :=
   \vdual{\dDiv\QQ}{\dv}_T - \vdual{\QQ}{\Ggrad \dv}_T
   &&(\QQ\in\HdDiv{T},\ \dv\in H^2(T)),
\end{align}
and the product variants $\traceGG{}:\;H^2(\cT)\to (\HdDiv{\cT})'$,
$\traceDD{}:\;\HdDiv{\cT}\to (H^2(\cT))'$ defined by
\begin{align}
   \label{dual_Ggrad}
   &\dual{\traceGG{}(v)}{\dQQ}_\cS := \sum_{T\in\cT} \dual{\traceGG{T}(v)}{\dQQ}_{\partial T}
   &&(v\in H^2(\cT),\ \dQQ\in\HdDiv{\cT}),\\
   \label{dual_dDiv}
   &\dual{\traceDD{}(\QQ)}{\dv}_\cS := \sum_{T\in\cT} \dual{\traceDD{T}(\QQ)}{\dv}_{\partial T} 
   &&(\QQ\in\HdDiv{\cT},\ \dv\in H^2(\cT)).
\end{align}
They give rise to the trace spaces
\[
   \bH^{3/2,1/2}(\cS) := \traceGG{}(H^2(\Omega)),\quad
   \bH^{-3/2,-1/2}(\cS) := \traceDD{}(\HdDiv{\Omega})
\]
and the respective subspaces
\[
   \bH^{3/2,1/2}_{0}(\cS) := \traceGG{}(H^2_s(\Omega)),\quad
   \bH^{3/2,1/2}_{00}(\cS) := \traceGG{}(H^2_0(\Omega))
\]
and
\[
   \bH^{-3/2,-1/2}_{0}(\cS) := \traceDD{}(\HdDivz{\Omega})
\]
where
\begin{equation} \label{HdDivz_def}
   \HdDivz{\Omega} := \{\QQ\in\HdDiv{\Omega};\; \dual{\traceDD{}(\QQ)}{\dv}_\cS=0
                                                    \ \forall \dv\in H^2_s(\Omega)\}.
\end{equation}
They are furnished with the following norms,
\begin{align*}
   &\|\btv\|_{3/2,1/2,d,\cS} := \inf\{\|v\|_{2,d};\; v\in H^2(\Omega),\; \traceGG{}(v)=\btv\},\\
   &\|\btv\|_{(\dDiv,d,\cT)'}
   :=
   \sup_{0\not=\dQQ\in\HdDiv{\cT}}
   \frac{\dual{\btv}{\dQQ}_\cS}{\|\dQQ\|_{\dDiv,d,\cT}}
   \qquad (\btv\in\bH^{3/2,1/2}(\cS))
\end{align*}
and
\begin{align*}
   &\|\tQ\|_{-3/2,-1/2,d,\cS}
   := \inf\{\|\QQ\|_{\dDiv,d};\; \QQ\in\HdDiv{\Omega},\; \traceDD{}(\QQ)=\tQ\},\\
   &\|\tQ\|_{(2,d,\cT)'}
   :=
   \sup_{0\not=\dv\in H^2(\cT)}
   \frac{\dual{\tQ}{\dv}_\cS}{\|\dv\|_{2,d,\cT}}
   \qquad (\tQ\in \bH^{-3/2,-1/2}(\cS)).
\end{align*}
Here, $\dual{\btv}{\dQQ}_\cS$ and $\dual{\tQ}{\dv}_\cS$ are the dualities, respectively,
\begin{align} \label{dualGG}
   \dual{\btv}{\dQQ}_\cS := \dual{\traceGG{}(v)}{\dQQ}_\cS
   \quad\text{for any}\ v\in H^2(\Omega)\ \text{such that}\ \traceGG{}(v)=\btv,
\end{align}
cf.~\eqref{dual_Ggrad}, \eqref{dual_GgradT}, and
\begin{align} \label{dualDD}
   \dual{\tQ}{\dv}_\cS := \dual{\traceDD{}(\QQ)}{\dv}_\cS
   \quad\text{for any}\ \QQ\in \HdDiv{\Omega}\ \text{such that}\ \traceDD{}(\QQ)=\tQ,
\end{align}
cf.~\eqref{dual_dDiv}, \eqref{dual_dDivT}.

The following are the scaled versions of \cite[Propositions~3.5,~3.9]{FuehrerHN_19_UFK},
though for the (more general) traces not involving boundary conditions.

\begin{prop} \label{prop_norms_KLd}
\begin{align*}
   \|\btv\|_{3/2,1/2,d,\cS}&=\|\btv\|_{(\dDiv,d,\cT)'}
   &&\hspace{-6em}\forall\btv\in \bH^{3/2,1/2}(\cS),\\
   \|\tQ\|_{-3/2,-1/2,d,\cS}&=\|\tQ\|_{(2,d,\cT)'}
   &&\hspace{-6em}\forall\tQ\in \bH^{-3/2,-1/2}(\cS).
\end{align*}
\end{prop}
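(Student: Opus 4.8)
The plan is to transcribe the proof of Proposition~\ref{prop_norms_Pd} to the fourth-order setting, replacing the pair $(\grad,\div)$ by $(\Ggrad,\dDiv)$ and the scaling exponent $2$ by $4$ throughout. The one genuinely new feature to watch is a sign: since $\dDiv=\div\Div$ entails two integrations by parts, the operators $\Ggrad$ and $\dDiv$ are formal adjoints with a \emph{plus} sign (as encoded in \eqref{dual_GgradT}, \eqref{dual_dDivT}), whereas in the Poisson case $\grad$ and $-\div$ were adjoint.

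First, the inequalities ``$\le$'' follow directly from the definitions of the scaled norms. Applying Cauchy--Schwarz to the two-term pairings
\[
   \dual{\btv}{\dQQ}_\cS=\vdual{v}{\dDiv\dQQ}_\cT-\vdual{\Ggrad v}{\dQQ}_\cT,\qquad
   \dual{\tQ}{\dv}_\cS=\vdual{\dDiv\QQ}{\dv}_\cT-\vdual{\QQ}{\Ggrad\dv}_\cT
\]
(for any liftings $v\in H^2(\Omega)$ of $\btv$ and $\QQ\in\HdDiv{\Omega}$ of $\tQ$) yields $\dual{\btv}{\dQQ}_\cS\le\|v\|_{2,d}\,\|\dQQ\|_{\dDiv,d,\cT}$ and $\dual{\tQ}{\dv}_\cS\le\|\QQ\|_{\dDiv,d}\,\|\dv\|_{2,d,\cT}$, hence $\|\btv\|_{(\dDiv,d,\cT)'}\le\|\btv\|_{3/2,1/2,d,\cS}$ and $\|\tQ\|_{(2,d,\cT)'}\le\|\tQ\|_{-3/2,-1/2,d,\cS}$.

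For the reverse inequality in the first identity I would, given $\btv$, define $\QQ\in\HdDiv{\cT}$ and then $v\in H^2(\cT)$ by the coercive problems
\[
   \vdual{\QQ}{\dQQ}+d^4\vdual{\dDiv\QQ}{\dDiv\dQQ}_\cT=d^4\dual{\btv}{\dQQ}_\cS,\qquad
   d^{-4}\vdual{v}{\dv}+\vdual{\Ggrad v}{\Ggrad\dv}_\cT=d^{-4}\dual{\traceGG{}(\dv)}{\QQ}_\cS.
\]
The element-wise strong form of the first problem is $\QQ=-d^4\,\Ggrad\,\dDiv\,\QQ$; substituting this identity shows that $v:=\dDiv\QQ$ (element-wise) solves the second problem, so that $\Ggrad v=-d^{-4}\QQ$. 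A direct computation using \eqref{dual_GgradT} then gives $\dual{\traceGG{}(v)}{\dQQ}_\cS=\dual{\btv}{\dQQ}_\cS$ for all $\dQQ$, i.e.\ $\traceGG{}(v)=\btv$; by the trace characterization underlying \cite{FuehrerHN_19_UFK} this promotes $v$ to $H^2(\Omega)$. Testing the two problems with $\dQQ:=\QQ$ and $\dv:=v$ gives $\|\QQ\|_{\dDiv,d}=d^4\|v\|_{2,d}$ and $\dual{\btv}{\QQ}_\cS=d^4\|v\|_{2,d}^2$. Since $v$ satisfies the reduced equation $d^{-4}v+\dDiv\,\Ggrad v=0$ element-wise -- the Euler--Lagrange equation of the minimal-norm lifting -- one has $\|v\|_{2,d}=\|\btv\|_{3/2,1/2,d,\cS}$, and the chain $\|\btv\|_{3/2,1/2,d,\cS}=\|v\|_{2,d}=\dual{\btv}{\QQ}_\cS/\|\QQ\|_{\dDiv,d}\le\|\btv\|_{(\dDiv,d,\cT)'}$ closes the first identity. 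The second identity is entirely analogous: given $\tQ$, define $v\in H^2(\cT)$ by $d^{-4}\vdual{v}{\dv}+\vdual{\Ggrad v}{\Ggrad\dv}_\cT=\dual{\tQ}{\dv}_\cS$ and $\QQ\in\HdDiv{\cT}$ by $\vdual{\QQ}{\dQQ}+d^4\vdual{\dDiv\QQ}{\dDiv\dQQ}_\cT=\dual{\traceDD{}(\dQQ)}{v}_\cS$; the strong form $\dDiv\,\Ggrad v=-d^{-4}v$ forces $\QQ=-\Ggrad v$, whence $\traceDD{}(\QQ)=\tQ$ by \eqref{dual_dDivT} and $\QQ\in\HdDiv{\Omega}$, and testing with $\dv:=v$, $\dQQ:=\QQ$ gives $\|\QQ\|_{\dDiv,d}^2=\|v\|_{2,d}^2=\dual{\tQ}{v}_\cS$, so that $\|\tQ\|_{-3/2,-1/2,d,\cS}\le\|\QQ\|_{\dDiv,d}=\dual{\tQ}{v}_\cS/\|v\|_{2,d}\le\|\tQ\|_{(2,d,\cT)'}$.

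I expect the main obstacle to be bookkeeping rather than ideas: getting the integration-by-parts signs right for the fourth-order operators (the plus-adjointness of $(\Ggrad,\dDiv)$ is exactly what produces the sign in $\QQ=-d^4\,\Ggrad\,\dDiv\,\QQ$ and $\QQ=-\Ggrad v$, in contrast to the Poisson case), and the conformity step -- that matching traces promote $v$ and $\QQ$ from the broken spaces $H^2(\cT)$, $\HdDiv{\cT}$ to the conforming spaces $H^2(\Omega)$, $\HdDiv{\Omega}$. The latter is the only ingredient not obtained by a scalar computation; it is the (unscaled) content of \cite[Propositions~3.5,~3.9]{FuehrerHN_19_UFK}, and since the scalings enter the construction only through the fixed powers $d^{\pm4}$, those characterizations carry over verbatim.
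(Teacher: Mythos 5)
Your proposal is correct and follows essentially the same route as the paper: the same pair of coercive auxiliary problems \eqref{tv_Q}--\eqref{tv_v2} and \eqref{tQ_v}--\eqref{tQ_tau}, the same element-wise strong-form identifications ($v=\div_h\Div_h\QQ$, $\QQ=-\Ggrad_h v$, with the correct plus-adjointness signs), the same promotion to conforming spaces via matching traces, and the same testing with $\dv:=v$, $\dQQ:=\QQ$ to close the norm chain. Your explicit appeal to the element-wise Euler--Lagrange equation $d^{-4}v+\dDiv\Ggrad v=0$ to justify $\|v\|_{2,d}=\|\btv\|_{3/2,1/2,d,\cS}$ is exactly the argument the paper uses (implicitly, by analogy with Proposition~\ref{prop_norms_Pd}).
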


\begin{proof}
The inequalities ``$\ge$'' are immediately clear by bounding the dualities
\[
   \dual{\traceGG{}(v)}{\dQQ}_\cS \le \|v\|_{2,d}\|\dQQ\|_{\dDiv,d,\cT}
   \quad\forall v\in H^2(\Omega),\ \dQQ\in \HdDiv{\cT}
\]
and
\[
   \dual{\traceDD{}(\QQ)}{\dv}_\cS \le \|\QQ\|_{\dDiv,d}\|\dv\|_{2,d,\cT}
   \quad\forall \QQ\in \HdDiv{\Omega},\ \dv\in H^2(\cT).
\]
To show the other direction we proceed as
in the proofs of \cite[Lemmas~3.2, 3.3]{FuehrerHN_19_UFK}, but considering the
scalings as before for the Poisson problem.

Let $\btv\in\bH^{3/2,1/2}(\cS)$ be given. We define
$\QQ\in\HdDiv{\cT}$ as the solution to
\begin{equation} \label{tv_Q}
   \vdual{\QQ}{\dQQ} + d^4 \vdual{\dDiv\QQ}{\dDiv\dQQ}_\cT = d^4 \dual{\btv}{\dQQ}_\cS
   \quad\forall \dQQ\in \HdDiv{\cT},
\end{equation}
and then $v\in H^2(\cT)$ as the solution to
\begin{equation} \label{tv_v2}
   d^{-4}\vdual{v}{\dv} + \vdual{\Ggrad v}{\Ggrad\dv}_\cT = d^{-4} \dual{\traceGG{}(\dv)}{\QQ}_\cS
   \quad\forall \dv\in H^2(\cT).
\end{equation}
Analogously as in the proof of Proposition~\ref{prop_norms_Pd} we conclude that
$v=\div_h\Div_h\QQ$ and $\traceGG{}(v)=\btv$ so that $v\in H^2(\Omega)$.
Furthermore, selecting $\dQQ:=\QQ$ in \eqref{tv_Q} and $\dv:=v$ in \eqref{tv_v2}, it follows that
\[
   \|\QQ\|_{\dDiv,d,\cT}^2 = d^4 \dual{\btv}{\QQ}_\cS = d^8 \|v\|_{2,d}^2
\]
and
\[
   \|\btv\|_{3/2,1/2,d,\cS} = \|v\|_{2,d} = \frac {\dual{\btv}{\QQ}_\cS}{\|\QQ\|_{\dDiv,d,\cT}}
   \le \|\btv\|_{(\dDiv,d,\cT)'}.
\]
The bound for $\tQ$ is also straightforward. Given $\tQ\in \bH^{-3/2,-1/2}(\cS)$ we define
$v\in H^2(\cT)$ as the solution to
\begin{equation} \label{tQ_v}
   d^{-4}\vdual{v}{\dv} + \vdual{\Ggrad v}{\Ggrad\dv}_\cT = \dual{\tQ}{\dv}_\cS
   \quad\forall \dv\in H^2(\cT),
\end{equation}
and $\QQ\in\HdDiv{\cT}$ as the solution to
\begin{equation} \label{tQ_tau}
   \vdual{\QQ}{\dQQ} + d^4 \vdual{\dDiv\QQ}{\dDiv\dQQ}_\cT = \dual{\traceDD{}(\dQQ)}{v}_\cS
   \quad\forall \dQQ\in \HdDiv{\cT}.
\end{equation}
Then $\QQ=-\Grad_h\grad_h v$, $\traceDD{}(\QQ)=\tQ$, $\QQ\in\HdDiv{\Omega}$ and, setting
$\dv:=v$ in \eqref{tQ_v} and $\dQQ:=\QQ$ in \eqref{tQ_tau}, we obtain
\[
   \|\tQ\|_{-3/2,-1/2,d,\cS} = \|\QQ\|_{\dDiv,d}
   = \frac {\dual{\tQ}{v}_\cS}{\|v\|_{2,d,\cT}} \le \|\tQ\|_{(2,d,\cT)'}.
\]
\end{proof}

\subsection{Domain-robust DPG method} \label{sec_KL_DPG}

For the clamped plate we consider the ultraweak formulation of \eqref{KL} studied in
\cite{FuehrerHN_19_UFK}. The simply supported case is taken from \cite{FuehrerHS_20_UFR}.
Depending on the boundary condition, we need different trace spaces. For the clamped plate
we define
\[
   \bH^{3/2,1/2}_{c}(\cS) := \bH^{3/2,1/2}_{00}(\cS),\quad
   \bH^{-3/2,-1/2}_{c}(\cS) := \bH^{-3/2,-1/2}(\cS),
\]
and for the simply supported case,
\[
   \bH^{3/2,1/2}_{s}(\cS) := \bH^{3/2,1/2}_{0}(\cS),\quad
   \bH^{-3/2,-1/2}_{s}(\cS) := \bH^{-3/2,-1/2}_{0}(\cS).
\]
Introducing the independent trace variables
$\tM:=\traceDD{}(\MM)$, $\btu:=\traceGG{}(u)$, and spaces
\begin{align*}
   &U_a := L_2(\Omega)\times\LL_2^s(\Omega)\times \bH^{3/2,1/2}_{a}(\cS) \times \bH^{-3/2,-1/2}_a(\cS)
   \quad (a\in\{c,s\}),\\
   &V := H^2(\cT)\times H(\div\Div\!,\cT)
\end{align*}
with respective scaled norms
\begin{align*}
   \|(u,\MM,\btu,\tM)\|_{U,d}^2
   &:=
   d^{-4}\|u\|^2 + \|\MM\|^2 + \|\btu\|_{3/2,1/2,d,\cS}^2 + \|\tM\|_{-3/2,-1/2,d,\cS}^2,
   \\
   \|(v,\QQ)\|_{V,d}^2
   &:=
   \|v\|_{2,d,\cT}^2 + \|\QQ\|_{\dDiv,d,\cT}^2,
\end{align*}
our variational formulation of \eqref{KL} is:
\emph{Find $(u,\MM,\btu,\tM)\in U$ such that}
\begin{align} \label{VF_KL}
   b(u,\MM,\btu,\tM;v,\QQ) = L(v,\QQ)
   \quad\forall (v,\QQ)\in V.
\end{align}
Here, $U=U_c$ when considering boundary condition \eqref{KLc} (clamped),
$U=U_s$ for the simply supported case \eqref{KLs},
\begin{align*}
   b(u,\MM,\btu,\tM;v,\QQ)
   :=
   &\vdual{\MM}{\Ggrad v+\cCinv\QQ}_\cT
   + \vdual{u}{\dDiv\QQ}_\cT
   - \dual{\btu}{\QQ}_\cS + \dual{\tM}{v}_\cS,
\end{align*}
$L(v,\QQ) := -\vdual{f}{v}$, and the dualities $\dual{\btu}{\QQ}_\cS$, $\dual{\tM}{v}_\cS$
are as defined in \eqref{dualGG},~\eqref{dualDD}, respectively.

The DPG scheme is standard, except for the use of a scaled inner product in $V$.
For a given finite-dimensional subspace $U_h\subset U$,
\begin{align} \label{DPG_KLd}
   \bu_h\in U_h:\quad b(\bu_h,\bv) = L(\bv)\quad\forall \bv\in \Theta_d(U_h)
\end{align}
where the operator $\Theta_d:\;U\to V$ is defined as
\[
   \ip{\Theta_d\bu}{v}_{V,d} = b(\bu,v)\quad\forall v\in V
\]
with scaled inner product
\[
   \ip{(v,\QQ)}{(\dv,\dQQ)}_{V,d} :=
   d^{-4} \vdual{v}{\dv} + \vdual{\Ggrad v}{\Ggrad\dv}_\cT
   + \vdual{\QQ}{\dQQ} + d^4 \vdual{\dDiv\QQ}{\dDiv\dQQ}_\cT.
\]

\begin{theorem} \label{thm_KLd}
Let $f\in L_2(\Omega)$ be given. Formulation \eqref{VF_KL}
is well posed and equivalent to problem \eqref{KL}. Specifically,
if $(u,\MM)\in H^2(\Omega)\times \HdDiv{\Omega}$ solves \eqref{KL}
with one of the boundary conditions \eqref{KLc} or \eqref{KLs}, then
$\bu=(u,\MM,\traceGG{}(u),\traceDD{}(\MM))\in U$ solves \eqref{VF_P}.
Correspondingly, the solution $\bu=(u,\MM,\btu,\tM)\in U$ of \eqref{VF_KL} satisfies
$\btu=\traceGG{}(u)$, $\tM=\traceDD{}(\MM)$, and $(u,\MM)\in H^2(\Omega)\times\HdDiv{\Omega}$
solves \eqref{KL} with the corresponding boundary condition.

Furthermore, there exists a unique solution $\bu_h$ to \eqref{DPG_KLd}.
Assuming that \eqref{PKL} holds, and the scaling parameter is chosen as
$d=\cPP$ with $\cPP$ from \eqref{PKL}, $\bu_h$ satisfies
\[
   \|\bu-\bu_h\|_{U,d} \le C\,\mathrm{inf}\,\{\|\bu-\bw\|_{U,d};\; \bw\in U_h\}
\]
with a constant $C>0$ that is independent of $f$, $U_h$, $\cT$ and $\Omega$.
Here, $\bu\in U$ is the solution to \eqref{VF_KL}.
\end{theorem}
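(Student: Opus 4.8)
The plan is to mirror the proof of Theorem~\ref{thm_Pd}, invoking the abstract framework of \cite{CarstensenDG_16_BSF} with the three ingredients it requires: the trace-norm identities, a scaled inf-sup (coercivity-type) bound for the reduced bilinear form, and the boundedness of the full bilinear form. The equivalence and well-posedness statement (first paragraph) follows as in \cite{FuehrerHN_19_UFK,FuehrerHS_20_UFR}, since scaling the norms does not alter the solution set of \eqref{VF_KL}; only the quasi-optimality constant is affected. So the real work is the quasi-optimal error bound, and I would organize it exactly as in Theorem~\ref{thm_Pd}.

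First I would record that Proposition~\ref{prop_norms_KLd} already delivers the trace-norm identities, hence \cite[Assumption~3.2]{CarstensenDG_16_BSF} holds with constant~$1$. Next I would identify the reduced bilinear form
\[
   b_0((u,\MM);(v,\QQ)) := \vdual{\MM}{\Ggrad v + \cCinv\QQ}_\cT + \vdual{u}{\dDiv\QQ}_\cT
\]
and establish its boundedness $b_0 \le \|b_0\|\,\|\bu\|_{U,d}\|\bv\|_{V,d}$, together with the analogous bound \eqref{b_bound_P} for the full form~$b$; these are routine applications of Cauchy--Schwarz against the scaled norms, where the $d^{\pm4}$ weights are arranged precisely so that every cross term is balanced (as they were designed to be in \S\ref{sec_KL_traces}). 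The constants come out as fixed numbers independent of $d$ and $\Omega$, which is the whole point of the scaling.

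The main obstacle is the scaled stability of the adjoint problem, i.e.\ \cite[Assumption~3.1]{CarstensenDG_16_BSF}: I must show
\[
   \bigl(d^{-4}\|u\|^2 + \|\MM\|^2\bigr)^{1/2}
   \le c_0^{-1}\sup_{0\neq(v,\QQ)}\frac{b_0((u,\MM);(v,\QQ))}{\|(v,\QQ)\|_{V,d}}
\]
with $c_0$ independent of~$d$ and~$\Omega$. Following Remark~\ref{rem_P}, I reduce the adjoint equations $\dDiv\QQ = g_1$, $\cCinv\QQ + \Ggrad v = \g_2$ to the fourth-order problem $\dDiv(\cC\Ggrad v) = \dDiv(\cC\g_2) - g_1$ posed in $H^2_0(\Omega)$ or $H^2_s(\Omega)$ according to the boundary condition. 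Here the Poincar\'e inequality \eqref{PKL}, with its \emph{quartic} scaling $\|v\|\le\cPP^2\|\Ggrad v\|$, takes the role that \eqref{PP} played before, and ellipticity of $\cC$ gives two-sided control $\cmin\|\Ggrad v\|^2 \le \vdual{\cC\Ggrad v}{\Ggrad v}$. Testing with $v$ and using \eqref{PKL} with $d=\cPP$ should yield $d^{-4}\|v\|^2 \le \|\Ggrad v\|^2 \lesssim d^4\|g_1\|^2 + \|\g_2\|^2$, the fourth-order analogue of \eqref{adj_Pd_v}; reassembling the $\MM$- and $\QQ$-contributions and bounding $\vdual{g_1}{v}$ by Young's inequality then gives $\|(v,\QQ)\|_{V,d}^2 \lesssim d^4\|g_1\|^2 + \|\g_2\|^2$ with a uniform constant, which is equivalent to the desired inf-sup bound.

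The one genuinely delicate point, absent from the Poisson case, is the $\cC$-dependence: I must track the constants $\cmin,\cmax$ from the ellipticity of~$\cC$ through the estimate and verify they do not conspire with~$d$. Since $\cC$ is a fixed material tensor independent of the domain, this is harmless for the stated $\Omega$-robustness, but I would state the bounds carefully so that the final quasi-optimality constant~$C$ depends only on $\cmin,\cmax$ and on the fixed numerical constants from the Young and Cauchy--Schwarz steps, and not on~$d$ or~$\Omega$. With Assumptions~3.1 and~3.2 verified, \cite[Theorem~3.3]{CarstensenDG_16_BSF} yields $c_1\|\bu\|_{U,d}\le\|B\bu\|_{(V,d)'}$, and combining this lower bound with the upper bound for~$b$ and the minimum-residual property of the DPG scheme produces the claimed quasi-optimal estimate, exactly as in the concluding lines of the proof of Theorem~\ref{thm_Pd}.
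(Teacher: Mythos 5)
Your proposal matches the paper's proof essentially step for step: the same abstract framework from \cite{CarstensenDG_16_BSF}, with Proposition~\ref{prop_norms_KLd} supplying Assumption~3.2, the elimination of $\QQ$ to obtain a fourth-order problem in $H^2_0(\Omega)$ or $H^2_s(\Omega)$ combined with \eqref{PKL} and the positive definiteness of $\cC$ supplying the scaled Assumption~3.1 (your bound $\|v\|_{2,d}^2\lesssim d^4\|g_1\|^2+\|\GG_2\|^2$ is exactly the paper's estimate), and boundedness of $b$ plus the minimum-residual property closing the argument. The only detail you leave implicit is the characterization of the conforming test space $Y_0$ through vanishing trace dualities, for which the paper cites \cite[Proposition~13]{FuehrerHS_20_UFR}; this is a citation-level ingredient rather than a substantive gap.
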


\begin{proof}
The well-posedness of \eqref{VF_KL} and its equivalence with \eqref{KL} has been proved in
\cite{FuehrerHN_19_UFK} (for the clamped case) and in \cite{FuehrerHS_20_UFR} (for
the simply supported case). It remains to prove the quasi-optimal error estimate.
Again, we use the abstract framework from \cite{CarstensenDG_16_BSF}
to show the uniform bound
\[
   \|\bu\|_{U,d} \lesssim \sup_{0\not=\bv\in V} \frac {b(\bu,\bv)}{\|\bv\|_{V,d}}
   \quad\forall \bu\in U.
\]
The immediate uniform boundedness
\[
    b(\bu,\bv) \lesssim \|\bu\|_{U,d} \|\bv\|_{V,d} \quad\forall \bu\in U,\; \bv\in V
\]
then gives the uniform equivalence of $\|\cdot\|_{U,d}$ and the residual in
$V'$ with norm $\|\cdot\|_{V,d}$ in $V$ and, thus, the uniform quasi-optimal convergence
of the DPG scheme, as stated.

Key ingredients for the framework from \cite{CarstensenDG_16_BSF} are
provided by Proposition~\ref{prop_norms_KLd} and reference \cite{FuehrerHS_20_UFR}
which is based on \cite{FuehrerHN_19_UFK}.
Remaining ingredient is the stability with respect to the scaled norm of the following adjoint problem,
\begin{align} \label{adj_KL}
   (v,\QQ)\in V(\Omega):\quad
   \dDiv\QQ=g_1\in L_2(\Omega),\quad
   \cCinv\QQ+\Ggrad v=\GG_2\in\LL_2^s(\Omega).
\end{align}
Here,
$V(\Omega):=H^2_0(\Omega)\times\HdDiv{\Omega}$ for the clamped case and
$V(\Omega):=H^2_s(\Omega)\times\HdDivz{\Omega}$ for the simply supported boundary condition.

Eliminating $\QQ$ we obtain the reduced problem
\[
   v\in H^2_m(\Omega):\quad
   \vdual{\cC\Ggrad v}{\Ggrad\dv} = \vdual{\cC\GG_2}{\Ggrad\dv}-\vdual{g_1}{\dv}
   \quad\forall\dv\in H^2_m(\Omega)
\]
with $H^2_m(\Omega):=H^2_0(\Omega)$ in the clamped case and
$H^2_m(\Omega):=H^2_s(\Omega)$ in the simply supported case, cf.~\cite[(47)]{FuehrerHS_20_UFR}.
Noting that $\cC$ is symmetric and positive definite we deduce the bound
\[
   \|\Ggrad v\|^2 \lesssim \bigl(d^4\|g_1\|^2 + \|\GG_2\|^2\bigr)^{1/2} \|v\|_{2,d}.
\]
Recall that the inherent constant only depends on $\cC$.
The Poincar\'e bounds \eqref{PKL} imply that
\begin{equation} \label{adj_KLd_v}
   \|v\|_{2,d}^2 \lesssim d^4\|g_1\|^2 + \|\GG_2\|^2.
\end{equation}
Returning to \eqref{adj_KL} we continue to bound
\[
   d^4\|\dDiv\QQ\|^2 = d^4 \|g_1\|^2,\quad
   \|\QQ\|^2 \le 2\|\cC \GG_2\|^2 + 2\|\cC\Ggrad v\|^2
              \lesssim d^4 \|g_1\|^2 + \|\GG_2\|^2,
\]
and combination with \eqref{adj_KLd_v} yields
\[
   \|(v,\btau)\|_{V,d}^2 = \|v\|_{2,d}^2 + \|\btau\|_{\dDiv,d}^2
   \lesssim
   d^4 \|\g_1\|^2 + \|\GG_2\|^2.
\]
We conclude that
\begin{align} \label{bzero_infsup_KL}
   \bigl(d^{-4}\|u\|^2 + \|\MM\|^2\bigr)^{1/2}
   &=
   \sup_{0\not=g_1\in L_2(\Omega)\atop 0\not=\GG_2\in\LL_2^s(\Omega)}
   \frac {\vdual{u}{g_1} + \vdual{\MM}{\GG_2}}
         {\bigl(d^4\|g_1\|^2 + \|\GG_2\|^2\bigr)^{1/2}}
   \lesssim
   \sup_{0\not=v\in H^2_0(\Omega)\atop 0\not=\QQ\in\HdDiv{\Omega}}
   \frac {b_0((u,\MM);(v,\QQ))}
         {\|(v,\QQ)\|_{V,d}}
\end{align}
where $b_0((u,\MM);(v,\QQ)):=\vdual{u}{\dDiv\QQ} + \vdual{\MM}{\cCinv\QQ+\Ggrad v}$.
Noting that, by \cite[Proposition~13]{FuehrerHS_20_UFR},
\[
   (v,\QQ)\in H^2_0(\Omega)\times \Hdiv{\Omega}
   \ \Leftrightarrow\
   \dual{\btv}{\QQ}_\cS + \dual{\tQ}{v}_\cS = 0
   \ \forall (\btv,\tQ)\in \bH^{3/2,1/2}_{00}(\cS) \times \bH^{-3/2,-1/2}(\cS)
\]
and
\[
   (v,\QQ)\in H^2_s(\Omega)\times \HdDivz{\Omega}
   \ \Leftrightarrow\
   \dual{\btv}{\QQ}_\cS + \dual{\tQ}{v}_\cS = 0
   \ \forall (\btv,\tQ)\in \bH^{3/2,1/2}_{0}(\cS) \times \bH^{-3/2,-1/2}_{0}(\cS)
\]
for any $(v,\btau)\in V$, \eqref{bzero_infsup_KL} shows that Assumption~3.1 in
\cite{CarstensenDG_16_BSF} is satisfied.
Also, Proposition~\ref{prop_norms_KLd} means that Assumption~3.2 from
\cite{CarstensenDG_16_BSF} holds with constant $\hat c=1$.
Bounding
\[
    b_0((u,\MM);(v,\QQ))
    \lesssim \|\bu\|_{U,d} \|\bv\|_{V,d}
    \quad\forall (u,\MM)\in L_2(\Omega)\times\LL_2^s(\Omega),\;
    (v,\QQ)\in H^2(\Omega)\times\Hdiv{\Omega}
\]
this means that $\|b_0\|\lesssim 1$ in the notation of \cite{CarstensenDG_16_BSF},
so that \cite[Theorem~3.3]{CarstensenDG_16_BSF} proves that
\[
  \|\bu\|_{U,d} \lesssim \sup_{0\not=\bv\in V} \frac {b(\bu,\bv)}{\|\bv\|_{V,d}}.
\]
This finishes the proof of the theorem.
\end{proof}

\begin{remark}
In the trivial case of tensor $\cC$ being the identity, identical techniques as used in
the proofs of Theorem~\ref{thm_Pd} and Corollary~\ref{cor_Pd} show that
Theorem~\ref{thm_KLd} holds with constant $C=9$, and that the following improved bound
holds true,
\[
   \bigl(d^{-4}\|u-u_h\|^2 + \|\MM-\MM_h\|^2\bigr)^{1/2}
   \le 3\sqrt{2}\, \mathrm{inf}\,\{\|\bu-\bw\|_{U,d};\; \bw\in U_h\}.
\]
\end{remark}

\subsection{Fully discrete scheme} \label{sec_KLh}

We use the notation for discrete spaces introduced in \S\ref{sec_Ph}, and use triangular
meshes $\cT$ consisting of shape-regular elements, cf.~\eqref{shape}. For the fully discrete DPG scheme
we employ lowest-order discrete spaces
\begin{align*}
  U_h &:= \cP^0(\cT)\times \PP^{0,s}(\cT) \times
          \traceGG{}(U_h^\mathrm{Ggrad}(\cT)) \times \traceDD{}(U_h^\mathrm{dDiv}(\cT)),\\
  V_h &:= \cP^3(\cT)\times \PP^{4,s}(\cT).
\end{align*}
Here, 
\begin{align*}
   U_h^\mathrm{Ggrad}(\cT)
   &:=
   \{u\in H_0^2(\Omega);\; u|_{\partial T} \in P^3(\EE_T),\,
                           \partial_{\bn_T}u|_{\partial T} \in P^1(\EE_T)\ \forall T\in\cT\}, \\
   U_h^\mathrm{dDiv}(\cT)
   &:=\\&\hspace{-3em}
   \{\MM\in \LL_2^s(\Omega);\;
     \bn_T\cdot\MM\bn_T|_{\partial T} \in \cP^0(\EE_T),\,
     \bigl(\bn_T\cdot\Div\MM + \partial_{\bt_T}(\bt\cdot\MM\bn_T)\bigr)|_{\partial T}\in \cP^0(\EE_T)\
     \forall T\in\cT\},
\end{align*}
$\PP^{p,s}(\cT)$ is the space of $\LL_2^s(\Omega)$-tensors whose components are
piecewise polynomials of degree $p$. Furthermore, $\partial_{\bn_T}$ and $\partial_{\bt_T}$
denote the exterior normal and (mathematically positive) tangential derivative operators,
respectively, on $\partial T$ ($T\in\cT$).

Our fully discrete DPG scheme for the Kirchhoff--Love model is
\begin{align} \label{DPG_KLdh}
   \bu_h\in U_h:\quad b(\bu_h,\bv) = L(\bv)\quad\forall \bv\in \Theta_{d,h}(U_h)
\end{align}
with trial-to-test operator $\Theta_{d,h}:\;U\to V_h$ defined as
\[
   \ip{\Theta_{d,h}\bu}{v}_{V,d} = b(\bu,v)\quad\forall v\in V_h.
\]
This scheme converges quasi-optimally and robustly with respect to $\Omega$.

\begin{theorem}
Assume that \eqref{PKL} holds with parameter $\cPP$, and that
$\max_{T\in\cT}h_T\lesssim d:=\cPP$. Then Theorem~\ref{thm_KLd} holds true for scheme
\eqref{DPG_KLdh}. In particular, it exists a Fortin operator $\Pi_F:\;V\to V_h$
that is uniformly bounded in $h$ and $d$ with respect to the scaled norm
$\|\cdot\|_{V,d}$. Scheme \eqref{DPG_KLdh} converges robustly
quasi-uniformly, that is,
  \begin{align*}
   \|\bu-\bu_h\|_{U,d} \le C\,C_F\,\mathrm{inf}\,\{\|\bu-\bw\|_{U,d};\; \bw\in U_h\}
  \end{align*}
with constant $C>0$ from Theorem~\ref{thm_KLd} and norm $C_F$ of the Fortin operator.
\end{theorem}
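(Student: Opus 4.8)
The plan is to follow the template of the proof of Theorem~\ref{thm_Pdh}: reuse the Fortin operator constructed for the (unscaled) fully discrete fourth-order DPG method in \cite{FuehrerH_19_FDD}, and only verify that it stays uniformly bounded once the scaled test norm $\|\cdot\|_{V,d}$ is used. I write $\Pi_F\bv:=(\Pi^\mathrm{Ggrad}v,\Pi^\mathrm{dDiv}\QQ)$ for $\bv=(v,\QQ)$, where $\Pi^\mathrm{Ggrad}\colon H^2(\cT)\to\cP^3(\cT)$ and $\Pi^\mathrm{dDiv}\colon\HdDiv{\cT}\to\PP^{4,s}(\cT)$ are the component operators from \cite{FuehrerH_19_FDD} for the spaces $U_h$, $V_h$ fixed above. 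The orthogonality $b(\bu_h,\bv-\Pi_F\bv)=0$ for all $\bu_h\in U_h$, $\bv\in V$ is proved there, and it is unaffected by the scaling since the bilinear form $b$ does not involve $d$. Hence the only remaining task is to show that $\Pi_F\colon(V,\|\cdot\|_{V,d})\to(V_h,\|\cdot\|_{V,d})$ has a bound $C_F$ independent of both $h$ and $d$; the abstract Fortin result \cite[Theorem~2.1]{GopalakrishnanQ_14_APD}, combined with the continuous stability already established in Theorem~\ref{thm_KLd}, then yields discrete stability and the quasi-best approximation estimate with constant $C\,C_F$.

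For the boundedness I would extract from the proofs in \cite{FuehrerH_19_FDD} the local (unscaled) estimates, which for the two second-order operators $\Ggrad$ and $\dDiv$ have the structure
\[
   \|\Pi^\mathrm{Ggrad}v\|_T \lesssim \|v\|_T + h_T\|\grad v\|_T + h_T^2\|\Ggrad v\|_T,\qquad
   \|\Ggrad\Pi^\mathrm{Ggrad}v\|_T \lesssim \|\Ggrad v\|_T,
\]
\[
   \|\Pi^\mathrm{dDiv}\QQ\|_T \lesssim \|\QQ\|_T + h_T^2\|\dDiv\QQ\|_T,\qquad
   \|\dDiv\Pi^\mathrm{dDiv}\QQ\|_T \le \|\dDiv\QQ\|_T .
\]
Inserting the scalings of $\|\cdot\|_{V,d}$ and using $h_T\lesssim d$ (so that $h_Td^{-1}\lesssim1$ and $h_T^2\lesssim d^2$), the divergence--divergence block is immediate: $\|\Pi^\mathrm{dDiv}\QQ\|_T\lesssim\|\QQ\|_T+d^2\|\dDiv\QQ\|_T$ and $d^2\|\dDiv\Pi^\mathrm{dDiv}\QQ\|_T\le d^2\|\dDiv\QQ\|_T$, each bounded by $\|\QQ\|_{\dDiv,d,T}$. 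For the Hessian block, $d^{-2}\|v\|_T\le\|v\|_{2,d,T}$ and $d^{-2}h_T^2\|\Ggrad v\|_T\lesssim\|\Ggrad v\|_T$ are handled exactly as in Theorem~\ref{thm_Pdh}, and $\|\Ggrad\Pi^\mathrm{Ggrad}v\|_T\lesssim\|\Ggrad v\|_T$ is already of the desired form. Summing the squares over $T\in\cT$ would then produce $\|\Pi_F\bv\|_{V,d}\lesssim\|\bv\|_{V,d}$ with a constant $C_F$ depending only on the shape-regularity \eqref{shape} and on the hidden constants of \cite{FuehrerH_19_FDD}, but on neither $h$ nor $d$.

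I expect the genuinely new point, and the main obstacle, to be the intermediate first-order contribution $d^{-2}h_T\|\grad v\|_T$ in the Hessian block. Unlike the second-order situation of Theorem~\ref{thm_Pdh}, where $\|\grad v\|_T$ sits directly in the test norm, the scaled norm $\|v\|_{2,d,T}^2=d^{-4}\|v\|_T^2+\|\Ggrad v\|_T^2$ contains no gradient term, so this contribution cannot be absorbed term by term. I would control it with the scaled local interpolation inequality
\[
   \|\grad v\|_T^2 \lesssim \|v\|_T\,\|\Ggrad v\|_T + h_T^{-2}\|v\|_T^2,
\]
which follows by a standard scaling argument from the reference element. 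Squaring $d^{-2}h_T\|\grad v\|_T$, applying this inequality, and using $h_T\lesssim d$ together with Young's inequality bounds the term by $d^{-4}\|v\|_T^2+\|\Ggrad v\|_T^2=\|v\|_{2,d,T}^2$, as required. With this single extra estimate available, everything else is a routine rescaling of the arguments in \cite{FuehrerH_19_FDD} and a verbatim repetition of the concluding Fortin argument from the proof of Theorem~\ref{thm_Pdh}, yielding the claimed bound $\|\bu-\bu_h\|_{U,d}\le C\,C_F\,\inf\{\|\bu-\bw\|_{U,d};\;\bw\in U_h\}$.
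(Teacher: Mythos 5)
Your proposal is correct and takes the same route as the paper's (largely omitted) proof: reuse the Fortin operator of \cite{FuehrerH_19_FDD}, note that the orthogonality is unaffected by the scaling, verify uniform boundedness in $\|\cdot\|_{V,d}$ elementwise using $h_T\lesssim d$, and conclude quasi-optimality from \cite[Theorem~2.1]{GopalakrishnanQ_14_APD} exactly as in Theorem~\ref{thm_Pdh}. Moreover, your handling of the intermediate term $d^{-2}h_T\|\grad v\|_T$ --- which cannot be absorbed termwise because $\|v\|_{2,d,T}^2=d^{-4}\|v\|_T^2+\|\Ggrad v\|_T^2$ contains no first-order part, unlike the Poisson case --- via the scaled interpolation inequality $\|\grad v\|_T^2\lesssim\|v\|_T\,\|\Ggrad v\|_T+h_T^{-2}\|v\|_T^2$, followed by $h_T\lesssim d$ and Young's inequality, is a correct and complete justification of precisely the detail the paper dismisses as ``similar to the estimates for the Poisson problem.''
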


The proof is analogous to the proof of Theorem~\ref{thm_Pdh}, by using the Fortin operator
from~\cite{FuehrerH_19_FDD} and checking its uniform boundedness with respect to the scaled norm.
This is similar to the estimates for the Poisson problem, and is omitted.

\section{Numerical experiments} \label{sec_num}

In this section we present numerical examples that support our theoretical findings
for the two model problems of a second and fourth order.
Throughout we consider the computational domain
\begin{align*}
  \Omega = (0,R_1)\times (0,R_2)
\end{align*}
triangulated with shape-regular elements. We use uniform meshes and
lowest order approximation spaces, as defined in \S\ref{sec_Ph} with $p=0$ for the Poisson
problem, and as specified in \S\ref{sec_KLh} for the plate bending model.
In all the plots, (black) dotted lines indicate the lines with theoretical order of convergence
$\OO(h) = \OO( (\#\cT)^{-1/2})$. 

\subsection{Poisson problem}\label{sec:exp:secondorder}

First let us note that the Poincar\'e--Friedrichs inequality \eqref{PP} holds in $H^1_0(\Omega)$
with $\cPP=\min\{R_1,R_2\}$, cf.~\cite[II.1.5]{Braess_97_FET}, which is our choice of $d$.

We consider the manufactured solution 
\begin{align*}
  u(x,y) = \sin(\pi x/R_1)\sin(\pi y/R_2),
\end{align*}
and define the right-hand side $f$ via
\begin{align*}
  -\Delta u + \gamma\, u = f \qquad (\gamma\in\{0,1\}).
\end{align*}
In the displayed results we use the notation
\begin{align*}
  \|\bu-\bu_h\|_{E,d} &:= \sup_{0\neq \bv\in V_h} \frac{b(\bu-\bu_h,\bv)}{\|\bv\|_{V,d}},\quad
  \|\bu-\bu_h\|_{E} := \|\bu-\bu_h\|_{E,1}.
\end{align*}
For the first set of experiments we choose $\gamma=0$ and consider $R=R_1=R_2\in\{1,10,100\}$.
The results are collected in Figure~\ref{fig:diffusion} where the first column corresponds to
DPG approximations using $\|\cdot\|_{V}$ as test norm and the second one to approximations using
$\|\cdot\|_{V,d}$. The $j$-th row corresponds to the value $R=10^j$.
As $R$ grows we observe that the standard DPG method develops a pre-asymptotic range of reduced
convergence. For $R=100$, there is basically no error reduction up to
$100,000$ unknowns, despite of a smooth solution. The standard DPG scheme exhibits locking.
Additionally, control of the error of the field variables by the residual is completely lost.
This has serious consequences on adaptivity steered by the residual (not studied here).
In contrast, our proposed DPG variant is robust with respect to $R$, all the error curves
are insensitive to changes of $R$.
(We also calculated for $R=10^3$ and $R=10^4$ with results not shown here.)

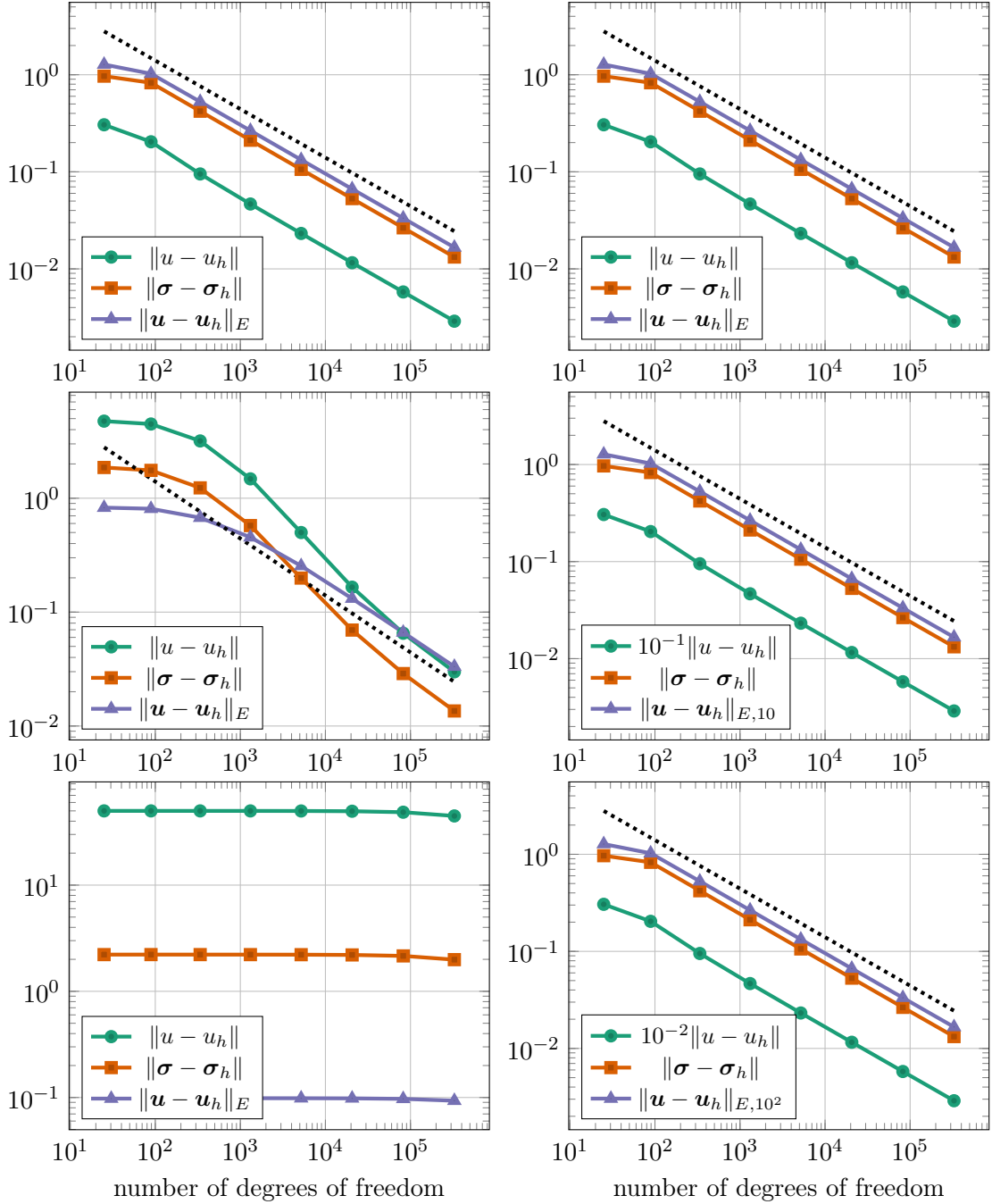
\begin{figure}
\begin{center}
\begin{tikzpicture}
\begin{loglogaxis}[
width=0.49\textwidth,
cycle list/Dark2-5,
cycle multiindex* list={
mark list*\nextlist
Dark2-5\nextlist
},
grid=major,
legend entries={\small $\|u-u_h\|$,\small $\|\bsigma-\bsigma_h\|$, \small $\|\bu-\bu_h\|_E$},
legend pos=south west,
every axis plot/.append style={ultra thick},
]
\addplot table [x=dofDPG,y=errU] {data/resultDiffusion_NoScaling_1.csv};
\addplot table [x=dofDPG,y=errSigma] {data/resultDiffusion_NoScaling_1.csv};
\addplot table [x=dofDPG,y=err] {data/resultDiffusion_NoScaling_1.csv};
\addplot [black,dotted,mark=none] table [x=dofDPG,y expr={14*sqrt(\thisrowno{1})^(-1)}] {data/resultDiffusion_NoScaling_1.csv};

\end{loglogaxis}
\end{tikzpicture}
\begin{tikzpicture}
\begin{loglogaxis}[
width=0.49\textwidth,
cycle list/Dark2-5,
cycle multiindex* list={
mark list*\nextlist
Dark2-5\nextlist
},
grid=major,
legend entries={\small $\|u-u_h\|$,\small $\|\bsigma-\bsigma_h\|$, \small $\|\bu-\bu_h\|_E$},
legend pos=south west,
every axis plot/.append style={ultra thick},
]
\addplot table [x=dofDPG,y=errU] {data/resultDiffusion_Scaling_1.csv};
\addplot table [x=dofDPG,y=errSigma] {data/resultDiffusion_Scaling_1.csv};
\addplot table [x=dofDPG,y=err] {data/resultDiffusion_Scaling_1.csv};
\addplot [black,dotted,mark=none] table [x=dofDPG,y expr={14*sqrt(\thisrowno{1})^(-1)}] {data/resultDiffusion_NoScaling_1.csv};

\end{loglogaxis}
\end{tikzpicture}
\begin{tikzpicture}
\begin{loglogaxis}[
width=0.49\textwidth,
cycle list/Dark2-5,
cycle multiindex* list={
mark list*\nextlist
Dark2-5\nextlist
},
grid=major,
legend entries={\small $\|u-u_h\|$,\small $\|\bsigma-\bsigma_h\|$, \small $\|\bu-\bu_h\|_E$},
legend pos=south west,
every axis plot/.append style={ultra thick},
]
\addplot table [x=dofDPG,y=errU] {data/resultDiffusion_NoScaling_2.csv};
\addplot table [x=dofDPG,y=errSigma] {data/resultDiffusion_NoScaling_2.csv};
\addplot table [x=dofDPG,y=err] {data/resultDiffusion_NoScaling_2.csv};
\addplot [black,dotted,mark=none] table [x=dofDPG,y expr={14*sqrt(\thisrowno{1})^(-1)}] {data/resultDiffusion_NoScaling_1.csv};

\end{loglogaxis}
\end{tikzpicture}
\begin{tikzpicture}
\begin{loglogaxis}[
width=0.49\textwidth,
cycle list/Dark2-5,
cycle multiindex* list={
mark list*\nextlist
Dark2-5\nextlist
},
grid=major,
legend entries={\small $10^{-1}\|u-u_h\|$,\small $\|\bsigma-\bsigma_h\|$, \small $\|\bu-\bu_h\|_{E,10}$},
legend pos=south west,
every axis plot/.append style={ultra thick},
]
\addplot table [x=dofDPG,y expr={\thisrowno{3}/10}] {data/resultDiffusion_Scaling_2.csv};
\addplot table [x=dofDPG,y=errSigma] {data/resultDiffusion_Scaling_2.csv};
\addplot table [x=dofDPG,y=err] {data/resultDiffusion_Scaling_2.csv};
\addplot [black,dotted,mark=none] table [x=dofDPG,y expr={14*sqrt(\thisrowno{1})^(-1)}] {data/resultDiffusion_NoScaling_1.csv};

\end{loglogaxis}
\end{tikzpicture}
\begin{tikzpicture}
\begin{loglogaxis}[
width=0.49\textwidth,
cycle list/Dark2-5,
cycle multiindex* list={
mark list*\nextlist
Dark2-5\nextlist
},
xlabel={number of degrees of freedom},
grid=major,
legend entries={\small $\|u-u_h\|$,\small $\|\bsigma-\bsigma_h\|$, \small $\|\bu-\bu_h\|_E$},
legend pos=south west,
every axis plot/.append style={ultra thick},
]
\addplot table [x=dofDPG,y=errU] {data/resultDiffusion_NoScaling_4.csv};
\addplot table [x=dofDPG,y=errSigma] {data/resultDiffusion_NoScaling_4.csv};
\addplot table [x=dofDPG,y=err] {data/resultDiffusion_NoScaling_4.csv};

\end{loglogaxis}
\end{tikzpicture}
\begin{tikzpicture}
\begin{loglogaxis}[
width=0.49\textwidth,
cycle list/Dark2-5,
cycle multiindex* list={
mark list*\nextlist
Dark2-5\nextlist
},
xlabel={number of degrees of freedom},
grid=major,
legend entries={\small $10^{-2}\|u-u_h\|$,\small $\|\bsigma-\bsigma_h\|$, \small $\|\bu-\bu_h\|_{E,10^2}$},
legend pos=south west,
every axis plot/.append style={ultra thick},
]
\addplot table [x=dofDPG,y expr={\thisrowno{3}/100}] {data/resultDiffusion_Scaling_4.csv};
\addplot table [x=dofDPG,y=errSigma] {data/resultDiffusion_Scaling_4.csv};
\addplot table [x=dofDPG,y=err] {data/resultDiffusion_Scaling_4.csv};
\addplot [black,dotted,mark=none] table [x=dofDPG,y expr={14*sqrt(\thisrowno{1})^(-1)}] {data/resultDiffusion_NoScaling_1.csv};

\end{loglogaxis}
\end{tikzpicture}
\end{center}
\caption{Results for the second-order problem from Section~\ref{sec:exp:secondorder} with $\gamma=0$
         on the computational domain $\Omega=(0,R)^2$ with $R=10^0,10^1,10^2$ (rows).
         The left and right column represents the results using the test norm
         $\|\cdot\|_V$ and $\|\cdot\|_{V,R}$, respectively.}
\label{fig:diffusion}
\end{figure}

For the second set of experiments we consider the same setting as before with $\gamma=1$.
The results are presented in Figure~\ref{fig:reaction}.
It is clear that the standard DPG scheme is robust, as expected by the stability analysis
(not reported here). Indeed, there is no need for a Poincar\'e inequality in this case.
For the same reason, when $\gamma=1$, there is no need or motivation to use scaled test norms.
Nevertheless, estimates still hold true but the scaling parameter $d$ introduces an
(in this case) unwanted imbalance between the errors in $u$ and $\bsigma$
(recall the factor $d^{-1}$ for $\|u-u_h\|$ which lowers this curve by a factor of
$10^{-2}$ in the last row of Figure~\ref{fig:reaction}).

\begin{figure}
\begin{center}
\begin{tikzpicture}
\begin{loglogaxis}[
width=0.49\textwidth,
cycle list/Dark2-5,
cycle multiindex* list={
mark list*\nextlist
Dark2-5\nextlist
},
grid=major,
legend entries={\small $\|u-u_h\|$,\small $\|\bsigma-\bsigma_h\|$, \small $\|\bu-\bu_h\|_E$},
legend pos=south west,
every axis plot/.append style={ultra thick},
]
\addplot table [x=dofDPG,y=errU] {data/resultReaction_NoScaling_1.csv};
\addplot table [x=dofDPG,y=errSigma] {data/resultReaction_NoScaling_1.csv};
\addplot table [x=dofDPG,y=err] {data/resultReaction_NoScaling_1.csv};
\addplot [black,dotted,mark=none] table [x=dofDPG,y expr={14*sqrt(\thisrowno{1})^(-1)}] {data/resultDiffusion_NoScaling_1.csv};

\end{loglogaxis}
\end{tikzpicture}
\begin{tikzpicture}
\begin{loglogaxis}[
width=0.49\textwidth,
cycle list/Dark2-5,
cycle multiindex* list={
mark list*\nextlist
Dark2-5\nextlist
},
grid=major,
legend entries={\small $\|u-u_h\|$,\small $\|\bsigma-\bsigma_h\|$, \small $\|\bu-\bu_h\|_E$},
legend pos=south west,
every axis plot/.append style={ultra thick},
]
\addplot table [x=dofDPG,y=errU] {data/resultReaction_Scaling_1.csv};
\addplot table [x=dofDPG,y=errSigma] {data/resultReaction_Scaling_1.csv};
\addplot table [x=dofDPG,y=err] {data/resultReaction_Scaling_1.csv};
\addplot [black,dotted,mark=none] table [x=dofDPG,y expr={14*sqrt(\thisrowno{1})^(-1)}] {data/resultDiffusion_NoScaling_1.csv};

\end{loglogaxis}
\end{tikzpicture}
\begin{tikzpicture}
\begin{loglogaxis}[
width=0.49\textwidth,
cycle list/Dark2-5,
cycle multiindex* list={
mark list*\nextlist
Dark2-5\nextlist
},
grid=major,
legend entries={\small $\|u-u_h\|$,\small $\|\bsigma-\bsigma_h\|$, \small $\|\bu-\bu_h\|_E$},
legend pos=south west,
every axis plot/.append style={ultra thick},
]
\addplot table [x=dofDPG,y=errU] {data/resultReaction_NoScaling_2.csv};
\addplot table [x=dofDPG,y=errSigma] {data/resultReaction_NoScaling_2.csv};
\addplot table [x=dofDPG,y=err] {data/resultReaction_NoScaling_2.csv};
\addplot [black,dotted,mark=none] table [x=dofDPG,y expr={14*sqrt(\thisrowno{1})^(-1)}] {data/resultDiffusion_NoScaling_1.csv};

\end{loglogaxis}
\end{tikzpicture}
\begin{tikzpicture}
\begin{loglogaxis}[
width=0.49\textwidth,
cycle list/Dark2-5,
cycle multiindex* list={
mark list*\nextlist
Dark2-5\nextlist
},
grid=major,
legend entries={\small $10^{-1}\|u-u_h\|$,\small $\|\bsigma-\bsigma_h\|$, \small $\|\bu-\bu_h\|_{E,10}$},
legend pos=south west,
every axis plot/.append style={ultra thick},
]
\addplot table [x=dofDPG,y expr={\thisrowno{3}/10}] {data/resultReaction_Scaling_2.csv};
\addplot table [x=dofDPG,y=errSigma] {data/resultReaction_Scaling_2.csv};
\addplot table [x=dofDPG,y=err] {data/resultReaction_Scaling_2.csv};
\addplot [black,dotted,mark=none] table [x=dofDPG,y expr={14*sqrt(\thisrowno{1})^(-1)}] {data/resultDiffusion_NoScaling_1.csv};

\end{loglogaxis}
\end{tikzpicture}
\begin{tikzpicture}
\begin{loglogaxis}[
width=0.49\textwidth,
cycle list/Dark2-5,
cycle multiindex* list={
mark list*\nextlist
Dark2-5\nextlist
},
xlabel={number of degrees of freedom},
grid=major,
legend entries={\small $\|u-u_h\|$,\small $\|\bsigma-\bsigma_h\|$, \small $\|\bu-\bu_h\|_E$},
legend pos=south west,
every axis plot/.append style={ultra thick},
]
\addplot table [x=dofDPG,y=errU] {data/resultReaction_NoScaling_4.csv};
\addplot table [x=dofDPG,y=errSigma] {data/resultReaction_NoScaling_4.csv};
\addplot table [x=dofDPG,y=err] {data/resultReaction_NoScaling_4.csv};
\addplot [black,dotted,mark=none] table [x=dofDPG,y expr={100*sqrt(\thisrowno{1})^(-1)}] {data/resultDiffusion_NoScaling_1.csv};

\end{loglogaxis}
\end{tikzpicture}
\begin{tikzpicture}
\begin{loglogaxis}[
width=0.49\textwidth,
cycle list/Dark2-5,
cycle multiindex* list={
mark list*\nextlist
Dark2-5\nextlist
},
xlabel={number of degrees of freedom},
grid=major,
legend entries={\small $10^{-2}\|u-u_h\|$,\small $\|\bsigma-\bsigma_h\|$, \small $\|\bu-\bu_h\|_{E,10^2}$},
legend pos=south west,
every axis plot/.append style={ultra thick},
]
\addplot table [x=dofDPG,y expr={\thisrowno{3}/100}] {data/resultReaction_Scaling_4.csv};
\addplot table [x=dofDPG,y=errSigma] {data/resultReaction_Scaling_4.csv};
\addplot table [x=dofDPG,y=err] {data/resultReaction_Scaling_4.csv};
\addplot [black,dotted,mark=none] table [x=dofDPG,y expr={4*sqrt(\thisrowno{1})^(-1)}] {data/resultDiffusion_NoScaling_1.csv};

\end{loglogaxis}
\end{tikzpicture}
\end{center}
\caption{Results for the second-order problem from Section~\ref{sec:exp:secondorder} with $\gamma=1$
         on the computational domain $\Omega=(0,R)^2$ with $R=10^0,10^1,10^2$ (rows).
         The left and right column represents the results using the test norm
         $\|\cdot\|_V$ and $\|\cdot\|_{V,R}$, respectively.}
\label{fig:reaction}
\end{figure}
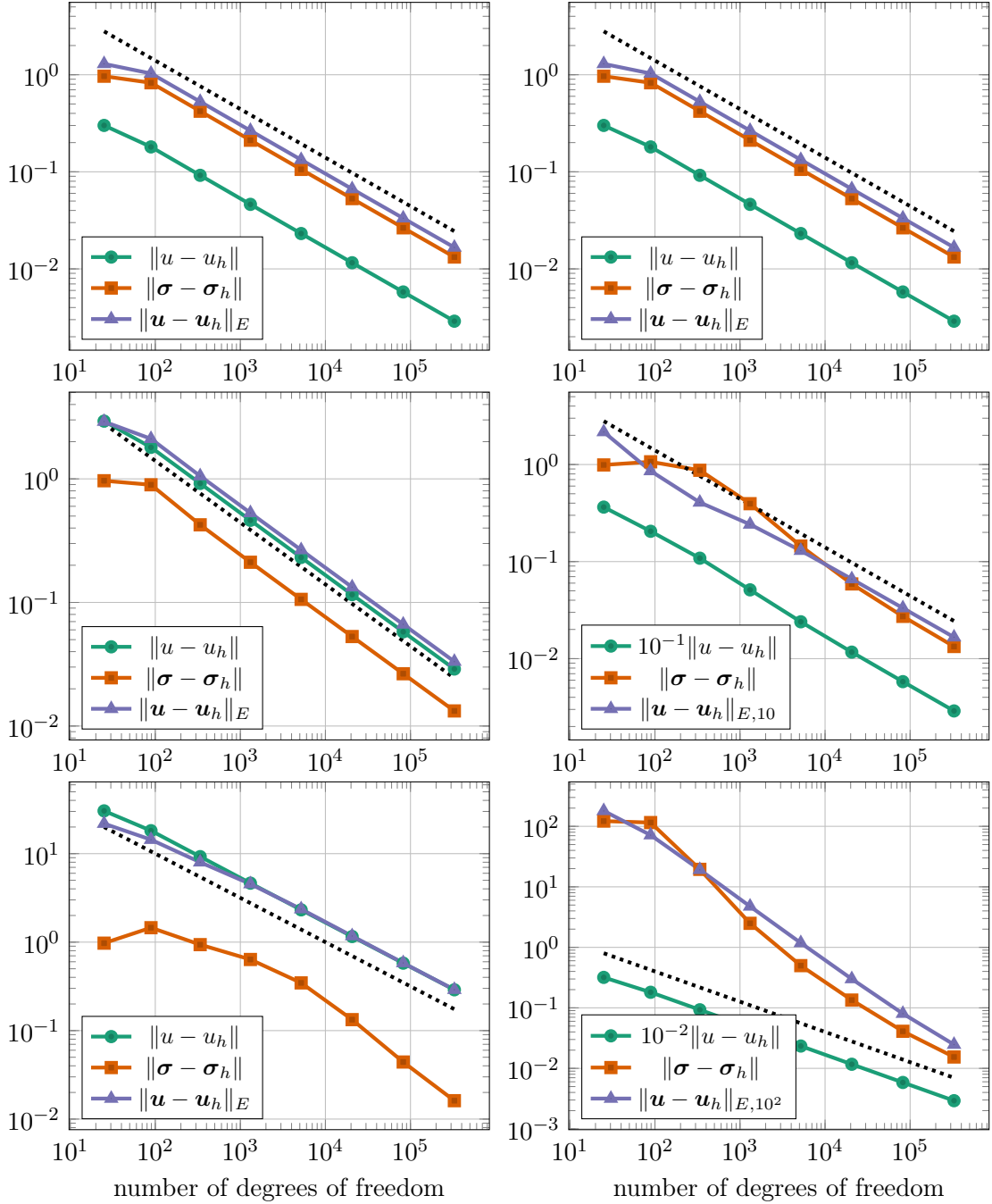

In the third experiment we consider $\gamma=0$ and the an-isotropic domain 
\begin{align*}
  \Omega = (0,R)\times (0,1)
\end{align*}
with $R\in\{10,100,1000\}$. Here, \eqref{PP} holds with $\cPP=1$ so that the standard
DPG method is identical to the scaled variant. Figure~\ref{fig:anisotropic} confirms that
there is no lack of robustness when using the standard DPG scheme. 

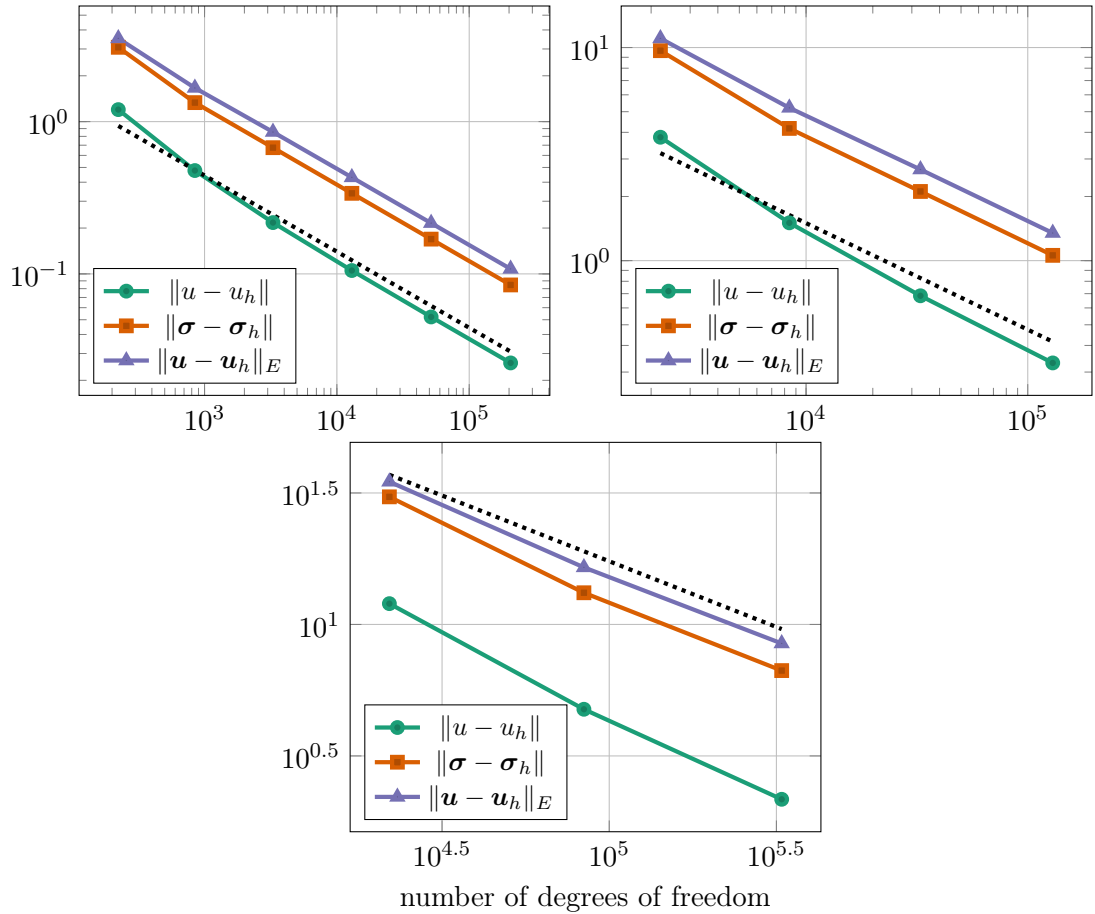
\begin{figure}
\begin{center}
\begin{tikzpicture}
\begin{loglogaxis}[
width=0.49\textwidth,
cycle list/Dark2-5,
cycle multiindex* list={
mark list*\nextlist
Dark2-5\nextlist
},
grid=major,
legend entries={\small $\|u-u_h\|$,\small $\|\bsigma-\bsigma_h\|$, \small $\|\bu-\bu_h\|_E$},
legend pos=south west,
every axis plot/.append style={ultra thick},
]
\addplot table [x=dofDPG,y=errU] {data/resultDiffusionAnisotropic_NoScaling_2.csv};
\addplot table [x=dofDPG,y=errSigma] {data/resultDiffusionAnisotropic_NoScaling_2.csv};
\addplot table [x=dofDPG,y=err] {data/resultDiffusionAnisotropic_NoScaling_2.csv};
\addplot [black,dotted,mark=none] table [x=dofDPG,y expr={14*sqrt(\thisrowno{1})^(-1)}] {data/resultDiffusionAnisotropic_NoScaling_2.csv};

\end{loglogaxis}
\end{tikzpicture}
%
\begin{tikzpicture}
\begin{loglogaxis}[
width=0.49\textwidth,
cycle list/Dark2-5,
cycle multiindex* list={
mark list*\nextlist
Dark2-5\nextlist
},
grid=major,
legend entries={\small $\|u-u_h\|$,\small $\|\bsigma-\bsigma_h\|$, \small $\|\bu-\bu_h\|_E$},
legend pos=south west,
every axis plot/.append style={ultra thick},
]
\addplot table [x=dofDPG,y=errU] {data/resultDiffusionAnisotropic_NoScaling_4.csv};
\addplot table [x=dofDPG,y=errSigma] {data/resultDiffusionAnisotropic_NoScaling_4.csv};
\addplot table [x=dofDPG,y=err] {data/resultDiffusionAnisotropic_NoScaling_4.csv};
\addplot [black,dotted,mark=none] table [x=dofDPG,y expr={150*sqrt(\thisrowno{1})^(-1)}] {data/resultDiffusionAnisotropic_NoScaling_4.csv};

\end{loglogaxis}
\end{tikzpicture}
%
\begin{tikzpicture}
\begin{loglogaxis}[
width=0.49\textwidth,
cycle list/Dark2-5,
cycle multiindex* list={
mark list*\nextlist
Dark2-5\nextlist
},
xlabel={number of degrees of freedom},
grid=major,
legend entries={\small $\|u-u_h\|$,\small $\|\bsigma-\bsigma_h\|$, \small $\|\bu-\bu_h\|_E$},
legend pos=south west,
every axis plot/.append style={ultra thick},
]
\addplot table [x=dofDPG,y=errU] {data/resultDiffusionAnisotropic_NoScaling_5.csv};
\addplot table [x=dofDPG,y=errSigma] {data/resultDiffusionAnisotropic_NoScaling_5.csv};
\addplot table [x=dofDPG,y=err] {data/resultDiffusionAnisotropic_NoScaling_5.csv};
\addplot [black,dotted,mark=none] table [x=dofDPG,y expr={5500*sqrt(\thisrowno{1})^(-1)}] {data/resultDiffusionAnisotropic_NoScaling_5.csv};

\end{loglogaxis}
\end{tikzpicture}
%
\end{center}
\caption{Results for the second-order problem from Section~\ref{sec:exp:secondorder} with $\gamma=0$
         on the computational domain $\Omega=(0,R)\times(0,1)$ with $R=10^1,10^2,10^3$,
       using the standard DPG scheme.}
\label{fig:anisotropic}
\end{figure}

\subsection{Poisson problem with mixed boundary conditions}\label{sec:exp:mixedBC}
We consider the an-isotropic domain $\Omega = (0,R)\times (0,1)$ with boundary parts
$\Gamma_D = \{0\}\times [0,1] \cup \{1\}\times[0,1]$ and
$\Gamma_N = \partial\Omega\setminus\overline\Gamma_D$.
Our manufactured solution $u(x,y) = \sin(\pi x/R)$ satisfies the Poisson problem
with mixed boundary conditions,
\begin{align*}
  -\Delta u &= \frac{\pi^2}{R^2} \sin(\pi x/R)\quad\text{in}\ \Omega,\quad
          u|_{\Gamma_D} = 0,\quad
          \partial_{\bn}u|_{\Gamma_N} = 0.
\end{align*}
In this case the Poincar\'e inequality \eqref{PP} is satisfied with
$d=d(\Omega)=R$, cf.~\cite[II.1.5]{Braess_97_FET}.
From our analysis we expect that the standard DPG scheme is not robust with respect to the domain size.
This is observed in Figure~\ref{fig:anisotropic:mixedBC} (left column). Again, control
of the field variables by the energy norm is lost.
One sees that the use of scaled test norms with $d=R$ establishes robustness (right column).

\begin{figure}
\begin{center}
\begin{tikzpicture}
\begin{loglogaxis}[
width=0.49\textwidth,
cycle list/Dark2-5,
cycle multiindex* list={
mark list*\nextlist
Dark2-5\nextlist
},
grid=major,
legend entries={\small $\|u-u_h\|$,\small $\|\bsigma-\bsigma_h\|$, \small $\|\bu-\bu_h\|_E$},
legend pos=south west,
every axis plot/.append style={ultra thick},
]
\addplot table [x=dofDPG,y=errU] {data/resultDiffusionAnisotropicMixedBC_NoScaling_1.csv};
\addplot table [x=dofDPG,y=errSigma] {data/resultDiffusionAnisotropicMixedBC_NoScaling_1.csv};
\addplot table [x=dofDPG,y=err] {data/resultDiffusionAnisotropicMixedBC_NoScaling_1.csv};
\addplot [black,dotted,mark=none] table [x=dofDPG,y expr={4*sqrt(\thisrowno{1})^(-1)}] {data/resultDiffusionAnisotropicMixedBC_NoScaling_1.csv};

\end{loglogaxis}
\end{tikzpicture}
\begin{tikzpicture}
\begin{loglogaxis}[
width=0.49\textwidth,
cycle list/Dark2-5,
cycle multiindex* list={
mark list*\nextlist
Dark2-5\nextlist
},
grid=major,
legend entries={\small $10^{-1}\|u-u_h\|$,\small $\|\bsigma-\bsigma_h\|$, \small $\|\bu-\bu_h\|_{E,10}$},
legend pos=south west,
every axis plot/.append style={ultra thick},
]
\addplot table [x=dofDPG,y expr={\thisrowno{3}/10}] {data/resultDiffusionAnisotropicMixedBC_Scaling_1.csv};
\addplot table [x=dofDPG,y=errSigma] {data/resultDiffusionAnisotropicMixedBC_Scaling_1.csv};
\addplot table [x=dofDPG,y=err] {data/resultDiffusionAnisotropicMixedBC_Scaling_1.csv};
\addplot [black,dotted,mark=none] table [x=dofDPG,y expr={sqrt(\thisrowno{1})^(-1)}] {data/resultDiffusionAnisotropicMixedBC_Scaling_1.csv};

\end{loglogaxis}
\end{tikzpicture}
\begin{tikzpicture}
\begin{loglogaxis}[
width=0.49\textwidth,
cycle list/Dark2-5,
cycle multiindex* list={
mark list*\nextlist
Dark2-5\nextlist
},
grid=major,
legend entries={\small $\|u-u_h\|$,\small $\|\bsigma-\bsigma_h\|$, \small $\|\bu-\bu_h\|_E$},
legend pos=south west,
every axis plot/.append style={ultra thick},
]
\addplot table [x=dofDPG,y=errU] {data/resultDiffusionAnisotropicMixedBC_NoScaling_2.csv};
\addplot table [x=dofDPG,y=errSigma] {data/resultDiffusionAnisotropicMixedBC_NoScaling_2.csv};
\addplot table [x=dofDPG,y=err] {data/resultDiffusionAnisotropicMixedBC_NoScaling_2.csv};
\addplot [black,dotted,mark=none] table [x=dofDPG,y expr={14*sqrt(\thisrowno{1})^(-1)}] {data/resultDiffusionAnisotropicMixedBC_NoScaling_2.csv};

\end{loglogaxis}
\end{tikzpicture}
\begin{tikzpicture}
\begin{loglogaxis}[
width=0.49\textwidth,
cycle list/Dark2-5,
cycle multiindex* list={
mark list*\nextlist
Dark2-5\nextlist
},
grid=major,
legend entries={\small $10^{-2}\|u-u_h\|$,\small $\|\bsigma-\bsigma_h\|$, \small $\|\bu-\bu_h\|_{E,10^2}$},
legend pos=south west,
every axis plot/.append style={ultra thick},
]
\addplot table [x=dofDPG,y expr={\thisrowno{3}/100}] {data/resultDiffusionAnisotropicMixedBC_Scaling_2.csv};
\addplot table [x=dofDPG,y=errSigma] {data/resultDiffusionAnisotropicMixedBC_Scaling_2.csv};
\addplot table [x=dofDPG,y=err] {data/resultDiffusionAnisotropicMixedBC_Scaling_2.csv};
\addplot [black,dotted,mark=none] table [x=dofDPG,y expr={1/10*sqrt(\thisrowno{1})^(-1)}] {data/resultDiffusionAnisotropicMixedBC_Scaling_2.csv};

\end{loglogaxis}
\end{tikzpicture}
\begin{tikzpicture}
\begin{loglogaxis}[
width=0.49\textwidth,
cycle list/Dark2-5,
cycle multiindex* list={
mark list*\nextlist
Dark2-5\nextlist
},
xlabel={number of degrees of freedom},
grid=major,
legend entries={\small $\|u-u_h\|$,\small $\|\bsigma-\bsigma_h\|$, \small $\|\bu-\bu_h\|_E$},
legend pos=south west,
every axis plot/.append style={ultra thick},
]
\addplot table [x=dofDPG,y=errU] {data/resultDiffusionAnisotropicMixedBC_NoScaling_3.csv};
\addplot table [x=dofDPG,y=errSigma] {data/resultDiffusionAnisotropicMixedBC_NoScaling_3.csv};
\addplot table [x=dofDPG,y=err] {data/resultDiffusionAnisotropicMixedBC_NoScaling_3.csv};

\end{loglogaxis}
\end{tikzpicture}
\begin{tikzpicture}
\begin{loglogaxis}[
width=0.49\textwidth,
cycle list/Dark2-5,
cycle multiindex* list={
mark list*\nextlist
Dark2-5\nextlist
},
xlabel={number of degrees of freedom},
grid=major,
legend entries={\small $10^{-3}\|u-u_h\|$,\small $\|\bsigma-\bsigma_h\|$, \small $\|\bu-\bu_h\|_{E,10^3}$},
legend pos=south west,
every axis plot/.append style={ultra thick},
]
\addplot table [x=dofDPG,y expr={\thisrowno{3}/1000}] {data/resultDiffusionAnisotropicMixedBC_Scaling_3.csv};
\addplot table [x=dofDPG,y=errSigma] {data/resultDiffusionAnisotropicMixedBC_Scaling_3.csv};
\addplot table [x=dofDPG,y=err] {data/resultDiffusionAnisotropicMixedBC_Scaling_3.csv};
\addplot [black,dotted,mark=none] table [x=dofDPG,y expr={1/100*sqrt(\thisrowno{1})^(-1)}] {data/resultDiffusionAnisotropicMixedBC_Scaling_3.csv};

\end{loglogaxis}
\end{tikzpicture}
\end{center}
\caption{Results for the second-order problem from Section~\ref{sec:exp:mixedBC}
  on the computational domain $\Omega=(0,R)\times(0,1)$ with $R=10^1,10^2,10^3$ (rows),
  using the standard DPG scheme (left column) and the DPG scheme with scaled test norm (right column).}
\label{fig:anisotropic:mixedBC}
\end{figure}

\subsection{Kirchhoff--Love model}\label{sec:exp:fourthorder}

We consider the manufactured solution 
\begin{align*}
  u(x,y) = \sin(\pi x/R_1)^2\sin(\pi y/R_2)^2
\end{align*}
on the computational domain $\Omega = (0,R_1)\times(0,R_2)$, use the identity for tensor
$\cC$ (the problem then reduces to the bi-Laplacian), specify the boundary condition of a
clamped plate, and define the right-hand side function $f:=\Delta^2 u$.
In this case the Poincar\'e-type estimate \eqref{PKL} holds with $\cPP=\min\{R_1,R_2\}$,
cf.~\cite[II.1.5,1.6]{Braess_97_FET}, which is our choice for parameter $d$.

In the displayed results we use the notation
\begin{align*}
  \|\bu-\bu_h\|_{E,d} &:= \sup_{0\neq \bv\in V_h} \frac{b(\bu-\bu_h,\bv)}{\|\bv\|_{V,d}},
  \quad
  \|\bu-\bu_h\|_{E} := \|\bu-\bu_h\|_{E,1}.
\end{align*}
For the first set of experiments we choose $R_1=R_2=R$. 
Figure~\ref{fig:KLove} visualizes the results for domains with $R=1,10,100$ (rows). 
In each case, the errors for $u$ and $\MM=-\Grad\grad u$ along with the energy errors are shown.
Again, the left and right columns correspond to DPG methods using
the test norms $\|\cdot\|_V$ and $\|\cdot\|_{V,d}$, respectively. 
As in the case of the second-order elliptic problem, we observe locking of the standard
DPG scheme whereas, when switching to the scaled test norm $\|\cdot\|_{V,d}$, robustness
is re-established.

Numerical experiments with an-isotropic domains $\Omega=(0,R)\times (0,1)$,
and clamped condition on the whole of the boundary, confirm that there is no need for
scaled test norms to guarantee robustness, as we have observed for the Poisson problem.

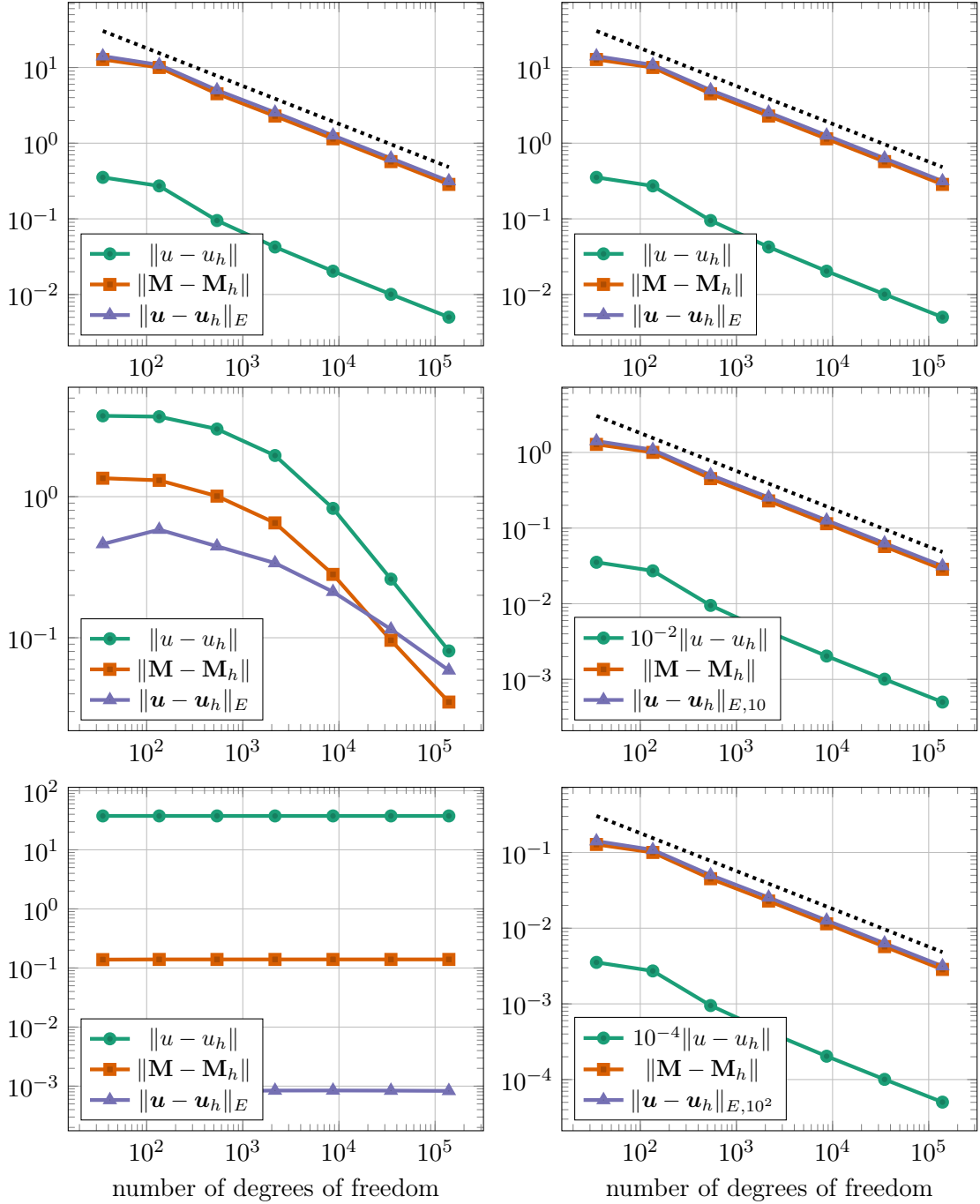
\begin{figure}
\begin{center}
\begin{tikzpicture}
\begin{loglogaxis}[
width=0.49\textwidth,
cycle list/Dark2-5,
cycle multiindex* list={
mark list*\nextlist
Dark2-5\nextlist
},
grid=major,
legend entries={\small $\|u-u_h\|$,\small $\|\MM-\MM_h\|$, \small $\|\bu-\bu_h\|_E$},
legend pos=south west,
every axis plot/.append style={ultra thick},
]
\addplot table [x=dofDPG,y=errU] {data/resultKLove_NoScaling_1.csv};
\addplot table [x=dofDPG,y=errSigma] {data/resultKLove_NoScaling_1.csv};
\addplot table [x=dofDPG,y=err] {data/resultKLove_NoScaling_1.csv};
\addplot [black,dotted,mark=none] table [x=dofDPG,y expr={18*10*sqrt(\thisrowno{1})^(-1)}] {data/resultKLove_NoScaling_1.csv};

\end{loglogaxis}
\end{tikzpicture}
\begin{tikzpicture}
\begin{loglogaxis}[
width=0.49\textwidth,
cycle list/Dark2-5,
cycle multiindex* list={
mark list*\nextlist
Dark2-5\nextlist
},
grid=major,
legend entries={\small $\|u-u_h\|$,\small $\|\MM-\MM_h\|$, \small $\|\bu-\bu_h\|_E$},
legend pos=south west,
every axis plot/.append style={ultra thick},
]
\addplot table [x=dofDPG,y=errU] {data/resultKLove_Scaling_1.csv};
\addplot table [x=dofDPG,y=errSigma] {data/resultKLove_Scaling_1.csv};
\addplot table [x=dofDPG,y=err] {data/resultKLove_Scaling_1.csv};
\addplot [black,dotted,mark=none] table [x=dofDPG,y expr={18*10*sqrt(\thisrowno{1})^(-1)}] {data/resultKLove_NoScaling_1.csv};

\end{loglogaxis}
\end{tikzpicture}
\begin{tikzpicture}
\begin{loglogaxis}[
width=0.49\textwidth,
cycle list/Dark2-5,
cycle multiindex* list={
mark list*\nextlist
Dark2-5\nextlist
},
grid=major,
legend entries={\small $\|u-u_h\|$,\small $\|\MM-\MM_h\|$, \small $\|\bu-\bu_h\|_E$},
legend pos=south west,
every axis plot/.append style={ultra thick},
]
\addplot table [x=dofDPG,y=errU] {data/resultKLove_NoScaling_2.csv};
\addplot table [x=dofDPG,y=errSigma] {data/resultKLove_NoScaling_2.csv};
\addplot table [x=dofDPG,y=err] {data/resultKLove_NoScaling_2.csv};

\end{loglogaxis}
\end{tikzpicture}
\begin{tikzpicture}
\begin{loglogaxis}[
width=0.49\textwidth,
cycle list/Dark2-5,
cycle multiindex* list={
mark list*\nextlist
Dark2-5\nextlist
},
grid=major,
legend entries={\small $10^{-2}\|u-u_h\|$,\small $\|\MM-\MM_h\|$, \small $\|\bu-\bu_h\|_{E,10}$},
legend pos=south west,
every axis plot/.append style={ultra thick},
]
\addplot table [x=dofDPG,y expr={\thisrowno{3}/100}] {data/resultKLove_Scaling_2.csv};
\addplot table [x=dofDPG,y=errSigma] {data/resultKLove_Scaling_2.csv};
\addplot table [x=dofDPG,y=err] {data/resultKLove_Scaling_2.csv};
\addplot [black,dotted,mark=none] table [x=dofDPG,y expr={18*sqrt(\thisrowno{1})^(-1)}] {data/resultKLove_NoScaling_1.csv};

\end{loglogaxis}
\end{tikzpicture}
\begin{tikzpicture}
\begin{loglogaxis}[
width=0.49\textwidth,
cycle list/Dark2-5,
cycle multiindex* list={
mark list*\nextlist
Dark2-5\nextlist
},
xlabel={number of degrees of freedom},
grid=major,
legend entries={\small $\|u-u_h\|$,\small $\|\MM-\MM_h\|$, \small $\|\bu-\bu_h\|_E$},
legend pos=south west,
every axis plot/.append style={ultra thick},
]
\addplot table [x=dofDPG,y=errU] {data/resultKLove_NoScaling_3.csv};
\addplot table [x=dofDPG,y=errSigma] {data/resultKLove_NoScaling_3.csv};
\addplot table [x=dofDPG,y=err] {data/resultKLove_NoScaling_3.csv};

\end{loglogaxis}
\end{tikzpicture}
\begin{tikzpicture}
\begin{loglogaxis}[
width=0.49\textwidth,
cycle list/Dark2-5,
cycle multiindex* list={
mark list*\nextlist
Dark2-5\nextlist
},
xlabel={number of degrees of freedom},
grid=major,
legend entries={\small $10^{-4}\|u-u_h\|$,\small $\|\MM-\MM_h\|$, \small $\|\bu-\bu_h\|_{E,10^2}$},
legend pos=south west,
every axis plot/.append style={ultra thick},
]
\addplot table [x=dofDPG,y expr={\thisrowno{3}/10000}] {data/resultKLove_Scaling_3.csv};
\addplot table [x=dofDPG,y=errSigma] {data/resultKLove_Scaling_3.csv};
\addplot table [x=dofDPG,y=err] {data/resultKLove_Scaling_3.csv};
\addplot [black,dotted,mark=none] table [x=dofDPG,y expr={18/10*sqrt(\thisrowno{1})^(-1)}] {data/resultKLove_NoScaling_1.csv};

\end{loglogaxis}
\end{tikzpicture}
\end{center}
\caption{Results for the fourth-order problem from Section~\ref{sec:exp:fourthorder}
         on the computational domain $\Omega=(0,R)^2$ with $R=10^0,10^1,10^2$ (rows).
         The left and right column represents the results using the test norm
         $\|\cdot\|_V$ and $\|\cdot\|_{V,d}$, respectively.}
\label{fig:KLove}
\end{figure}

\subsection{Kirchhoff--Love model with mixed boundary conditions}\label{sec:exp:KLove:mixedBC}
We consider the an-isotropic domain $\Omega = (0,R)\times (0,1)$ with boundary parts
$\Gamma_D = \{0\}\times [0,1] \cup \{1\}\times[0,1]$ and
$\Gamma_N = \partial\Omega\setminus\overline\Gamma_D$.
We define $f:=\Delta^2 u$ with manufactured solution $u(x,y) = \sin(\pi x/R)^2$,
which satisfies the Kirchhoff--Love plate bending problem with mixed boundary conditions,
\begin{align*}
  -\div\Div \MM &= f\quad\text{in}\ \Omega, \\
  \MM + \Grad\grad u &= 0\quad\text{in}\ \Omega, \\
  u = \partial_{\bn} u &= 0\quad\text{on}\ \Gamma_D
                                           \qquad\text{(clamped)}, \\
  \bn\cdot\MM\bn = \bn\cdot\Div\MM + \partial_{\bt}(\bt\cdot\MM\bn) &= 0\quad\text{on}\ \Gamma_N
                                           \qquad\text{(free)}, \\
  [\bt\cdot\MM\bn](z) &= 0 \quad \text{for nodes } z\in \Gamma_N.
\end{align*}
Here, $[\cdot](z)$ denotes the jump along the boundary in $z$,
and $\bt$ is the tangential unit vector along the boundary in mathematically positive orientation.
(The normal vector $\bn$ and tangential derivative $\partial_\bt$ have been defined previously.)
In this case, i.e., $v\in H^2(\Omega)$ with $v=\partial_\bn v=0$ on $\Gamma_D$,
the Poincar\'e inequality \eqref{PKL} holds with $\cPP=R$, cf.~\cite[II.1.5,1.6]{Braess_97_FET}.

From our analysis we expect that the standard DPG scheme fails to be robust with respect
to the domain size. This is confirmed in Figure~\ref{fig:KLove:mixedBC} (left column). 
In comparison, we observe that our DPG scheme with scaled test norm ($d=R$)
is locking free (right column).

\begin{figure}
\begin{center}
\begin{tikzpicture}
\begin{loglogaxis}[
width=0.49\textwidth,
cycle list/Dark2-5,
cycle multiindex* list={
mark list*\nextlist
Dark2-5\nextlist
},
grid=major,
legend entries={\small $\|u-u_h\|$,\small $\|\MM-\MM_h\|$, \small $\|\bu-\bu_h\|_E$},
legend pos=south west,
every axis plot/.append style={ultra thick},
]
\addplot table [x=dofDPG,y=errU] {data/resultKLoveMixedBC_NoScaling_1.csv};
\addplot table [x=dofDPG,y=errSigma] {data/resultKLoveMixedBC_NoScaling_1.csv};
\addplot table [x=dofDPG,y=err] {data/resultKLoveMixedBC_NoScaling_1.csv};
\addplot [black,dotted,mark=none] table [x=dofDPG,y expr={4*sqrt(\thisrowno{1})^(-1)}] {data/resultKLoveMixedBC_NoScaling_1.csv};

\end{loglogaxis}
\end{tikzpicture}
\begin{tikzpicture}
\begin{loglogaxis}[
width=0.49\textwidth,
cycle list/Dark2-5,
cycle multiindex* list={
mark list*\nextlist
Dark2-5\nextlist
},
grid=major,
legend entries={\small $10^{-2}\|u-u_h\|$,\small $\|\MM-\MM_h\|$, \small $\|\bu-\bu_h\|_{E,10}$},
legend pos=south west,
every axis plot/.append style={ultra thick},
]
\addplot table [x=dofDPG,y expr={\thisrowno{3}/100}] {data/resultKLoveMixedBC_Scaling_1.csv};
\addplot table [x=dofDPG,y=errSigma] {data/resultKLoveMixedBC_Scaling_1.csv};
\addplot table [x=dofDPG,y=err] {data/resultKLoveMixedBC_Scaling_1.csv};
\addplot [black,dotted,mark=none] table [x=dofDPG,y expr={1.5*sqrt(\thisrowno{1})^(-1)}] {data/resultKLoveMixedBC_Scaling_1.csv};

\end{loglogaxis}
\end{tikzpicture}
\begin{tikzpicture}
\begin{loglogaxis}[
width=0.49\textwidth,
cycle list/Dark2-5,
cycle multiindex* list={
mark list*\nextlist
Dark2-5\nextlist
},
xlabel={number of degrees of freedom},
grid=major,
legend entries={\small $\|u-u_h\|$,\small $\|\MM-\MM_h\|$, \small $\|\bu-\bu_h\|_E$},
legend pos=south west,
every axis plot/.append style={ultra thick},
]
\addplot table [x=dofDPG,y=errU] {data/resultKLoveMixedBC_NoScaling_2.csv};
\addplot table [x=dofDPG,y=errSigma] {data/resultKLoveMixedBC_NoScaling_2.csv};
\addplot table [x=dofDPG,y=err] {data/resultKLoveMixedBC_NoScaling_2.csv};

\end{loglogaxis}
\end{tikzpicture}
\begin{tikzpicture}
\begin{loglogaxis}[
width=0.49\textwidth,
cycle list/Dark2-5,
cycle multiindex* list={
mark list*\nextlist
Dark2-5\nextlist
},
xlabel={number of degrees of freedom},
grid=major,
legend entries={\small $25^{-2}\|u-u_h\|$,\small $\|\MM-\MM_h\|$, \small $\|\bu-\bu_h\|_{E,25}$},
legend style={at={(0.03,0.5)},anchor=west},
every axis plot/.append style={ultra thick},
]
\addplot table [x=dofDPG,y expr={\thisrowno{3}/(25*25)}] {data/resultKLoveMixedBC_Scaling_2.csv};
\addplot table [x=dofDPG,y=errSigma] {data/resultKLoveMixedBC_Scaling_2.csv};
\addplot table [x=dofDPG,y=err] {data/resultKLoveMixedBC_Scaling_2.csv};
\addplot [black,dotted,mark=none] table [x=dofDPG,y expr={1/10*sqrt(\thisrowno{1})^(-1)}] {data/resultKLoveMixedBC_Scaling_2.csv};

\end{loglogaxis}
\end{tikzpicture}
\end{center}
\caption{Results for the fourth-order problem from Section~\ref{sec:exp:KLove:mixedBC}
  on the computational domain $\Omega=(0,R)\times(0,1)$ with $R=10,25$ (rows),
  using the standard DPG scheme (left column) and the DPG scheme with scaled test norm (right column).}
\label{fig:KLove:mixedBC}
\end{figure}

%

\section*{Conclusions}

We have seen that the DPG method exhibits domain locking under four conditions:
\begin{enumerate}
\item an ultraweak formulation is used,
\item standard test norms are taken to calculate optimal test functions,
\item stability of the adjoint problem hinges on a Poincar\'e-type estimate, and
\item boundary conditions are such that the Poincar\'e inequality lacks uniformity
      with respect to the (sequence of) domain(s) under consideration.
\end{enumerate}
This locking means that there is a pre-asymptotic range of reduced convergence
(or no error reduction at all) and a loss of control of the field variables by the energy norm.
Higher-order problems are substantially more susceptible to domain locking than lower-order
problems.
This locking phenomenon can be avoided by switching to test norms where individual
terms are appropriately scaled, as indicated by the Poincar\'e ``constant'' of the problem.


\clearpage
\bibliographystyle{abbrv}
\bibliography{/home/norbert/tex/bib/bib,/home/norbert/tex/bib/heuer,/home/norbert/tex/bib/fem}
\end{document}